\date{}
\def\dis{\displaystyle}
\def\nd{\noindent}
\def\thend{\rule{3mm}{3mm}}
\def\Re{\mathbb{R}}
\newtheorem{thm}{Theorem}[section]
\newtheorem{cor}[thm]{Corollary}
\newtheorem{lem}[thm]{Lemma}
\newtheorem{prop}[thm]{Proposition}
\newtheorem{rem}[thm]{Remark}
\newtheorem{exm}[thm]{Example}
\begin{document}

\title[Elliptic equations involving the $p$-Laplacian]{Elliptic equations involving the $p$-Laplacian and a gradient term having natural growth}
\vspace{1cm}

\author{D. G. de Figueiredo}
\address{D. G. de Figueiredo \newline IMECC-UNICAMP, Caixa Postal 6065, Campinas-SP 13083-859, Brazil}
\email{djairo@ime.unicamp.br}

\author{J-P. Gossez}
\address{J.-P. Gossez \newline D\'epartement de Math\'ematique, C.P. 214, Universit\'e Libre de Bruxelles, 1050 Bruxelles, Belgium}
\email{gossez@ulb.ac.be}

\author{H. Ramos Quoirin}
\address{H. Ramos Quoirin \newline Universidad de Santiago de Chile, Casilla 307, Correo 2, Santiago, Chile}
\email{\tt humberto.ramos@usach.cl}

\author{P. Ubilla}
\address{P. Ubilla \newline Universidad de Santiago de Chile, Casilla 307, Correo 2, Santiago, Chile}
\email{\tt pedro.ubilla@usach.cl}

\subjclass[2010]{35J25, 35J61, 35J20, 35B09, 35B32} \keywords{Semilinear elliptic problem, Concave-convex nonlinearity, Nonlinear boundary condition, Positive solution, Bifurcation, Super and subsolutions, Nehari manifold}

\subjclass{35J20, 35J25, 35J62} \keywords{quasilinear elliptic problem, natural growth in the gradient, variational methods, p-Laplacian}
\thanks{D.G de Figueiredo was supported by CNPq. J-P. Gossez was supported by FNRS. H. Ramos Quoirin and P. Ubilla were supported by Fondecyt 1161635.}

\begin{abstract}
We investigate  the problem $$
\left\{
\begin{array}{ll}
-\Delta_p u = g(u)|\nabla u|^p +  f(x,u) \
& \mbox{in} \ \ \Omega, \ \ \\
u>0 \
&\mbox{in} \ \ \Omega, \ \ \\
 u = 0 \ &\mbox{on} \ \
\partial\Omega,
\end{array}
 \right. \leqno{(P)}
$$
in a bounded smooth domain $\Omega \subset \mathbb{R}^N$. Using a Kazdan-Kramer change of variable we reduce this problem to a quasilinear one without gradient term and therefore approachable by variational methods. In this way we come to some new and interesting problems for quasilinear elliptic equations which are motivated by the need to solve $(P)$. Among other results, we investigate the validity of the Ambrosetti-Rabinowitz condition according to the behavior of $g$ and $f$. Existence and multiplicity results for $(P)$ are established in several situations.
\end{abstract}

\maketitle

\section{\textbf{Introduction}}
\bigskip

This paper is concerned with existence, non-existence and multiplicity of solutions for the problem

$$
\left\{
\begin{array}{ll}
-\Delta_p u = g(u)|\nabla u|^p +  f(x,u) \
& \mbox{in} \ \ \Omega, \ \ \\
u>0 \
&\mbox{in} \ \ \Omega, \ \ \\
 u = 0 \ &\mbox{on} \ \
\partial\Omega.
\end{array}
 \right.\leqno{(P)}
$$
Here $\Delta_p u:= \text{div} \left(|\nabla u|^{p-2} \nabla u \right)$ is the $p$-Laplacian operator with $1<p<\infty$ and
 $\Omega\subset\mathbb{R}^{N}$ is a smooth bounded domain.
 
Equations like $(P)$ have attracted a considerable interest since the well-known works of Kazdan-Kramer \cite{KK} and Serrin  \cite{Se}. In \cite{Se} it was observed, also for $p=2$, that the quadratic growth with respect to the gradient plays a critical role as far as existence is concerned. Since then,  a large literature has been devoted to problems where the power in the gradient is strictly less than $p$. We shall focus here on the so-called natural growth of the gradient for the $p$-Laplacian, which is given precisely by  $|\nabla u|^p$.  
In \cite{KK} it was observed,  for $p=2$,  that the equation in $(P)$ enjoys some invariance property and can be transformed through a suitable change of variable into an equation without  gradient term. Although the transformed problem has no gradient term, variational methods do not apply in a straightforward way. The difficulty lies in establishing a compactness condition (e.g. the Palais-Smale condition) for the functional associated to the differential equation. Many authors have studied $(P)$ by different methods like degree theory, sub-super solutions, a priori estimates, etc. We refer, for instance, to \cite{BoMuPu,BEZF,FaMiMoTa,ILS,LiYenKe,MoMo,MoMu,PoSe}. 
Some special cases have been studied by using variational arguments and we refer, for instance, to \cite{ADP, GMIRQ, JRQ, JS} for the case $p=2$ and \cite{HBV,ILU} for $p>1$.
 
 One of difficulties in the variational approach related to $(P)$ is the fact that in many cases the nonlinearity in the transformed equation does not satisfy the Ambrosetti-Rabinowitz condition. For instance, when $g \equiv 1$, the nonlinearity in the transformed equation (see below) satisfies the Ambrosetti-Rabinowitz condition only if  $F(x,s):=\int_0^s f(x,t) dt$ has at least an exponential growth (see Remark \ref{rexp}). There are however a certain number of situations where it does satisfy this condition. One of our purposes in the present paper is to investigate these situations in a rather systematic way.
 
 
We will thus follow the approach initiated for $p=2$ in \cite{KK}, transforming $(P)$ into a problem of the form 
$$
\left\{
\begin{array}{ll}
-\Delta_p v = h(x,v) \
& \mbox{in} \ \ \Omega, \ \ \\
v>0 \
&\mbox{in} \ \ \Omega, \ \ \\
 v = 0 \ &\mbox{on} \ \
\partial\Omega.
\end{array}
 \right.\leqno{(Q)}
$$
The suitable change of variables $v=A(u)$ turns out to be
\begin{equation}
A(s):=\int_0^s e^{\frac{G(t)}{p-1}} \ dt,
\end{equation} 
where
$G(s)=\int_0^s g(t) \ dt$ and
\begin{equation}
h(x,s):=e^{G(A^{-1}(s))} f(x,A^{-1}(s)).
\end{equation}
Details are given at the beginning of Section 3.

 We will consider two cases: $h$ $p$-superlinear at zero (Subsection 2.1) and $h$ $p$-sublinear at zero (Subsection 2.2). As we will see in Lemma 5.3, this classification corresponds to $f$ being $p$-superlinear at zero and $f$ being $p$-sublinear at zero, respectively.

In our first result (Theorem \ref{t1}), $h(x,s)$ is $p$-superlinear at zero and satisfies the Ambrosetti-Rabinowitz condition. Here are a few equations which can be handled by Theorem \ref{t1}:\\
\begin{enumerate}
\item[(i)] $-\Delta_p u= C_1 |\nabla u|^p +u^q e^{C_2 u}$, where $C_1>0$, $0<C_2<C_1 \frac{p^*-p}{p-1}$ and $q>p-1$ (cf. Example \ref{e1});\\

\item[(ii)] $-\Delta_p u= C (1+u)^{-\alpha} |\nabla u|^p +\mu u^{p-1} e^{\beta u^{1-\alpha}}$, where $C>0$, $0<\alpha<1$, $0<\beta <\frac{p^*-p}{(p-1)(1-\alpha)}$ and $0<\mu<\lambda_1$  (cf. Example \ref{e2});\\

\item[(iii)] $-\Delta_p u= C (1+u)^{-\alpha} |\nabla u|^p +u^{r-1}$, where $C>0$, $\alpha \geq 1$, $p<r<p^*$ and, in addition, $C<r-p$ if $\alpha=1$  (cf. Example \ref{e3});\\

\item[(iv)] $-\Delta_p u= u^q |\nabla u|^p +\mu u^{p-1} e^{\beta u^{q+1}}$, where $q>0$, $0<\beta<\frac{p^*-p}{(p-1)(q+1)} $ and $0<\mu<\lambda_1$  (cf. Example \ref{e4}).\\

\end{enumerate}

We have used above the standard notation: $p^*$ is the Sobolev conjugate exponent given by $\frac{1}{p^*}=\frac{1}{p}-\frac{1}{N}$ with $p^*=\infty$ when $p\geq N$, and $\lambda_1$ is the first eigenvalue of $-\Delta_p$ in $W_0^{1,p}(\Omega)$.

Our second result (Theorem \ref{t2}) concerns once again the case where $h(x,s)$ is $p$-superlinear at zero but does not necessarily satisfy the Ambrosetti-Rabinowitz condition.  The approach here relies instead on a monotonicity condition which enables the verification of the Cerami condition. This monotonicity condition has been used recently by Liu \cite{Liu} and Iturriaga-Lorca-Ubilla \cite{ILU} (see also Miyagaki-Souto \cite{Mi-So} for $p=2$). Here are a few equations which can be handled by Theorem \ref{t2}:\\

\begin{enumerate}
\item[(v)] $-\Delta_p u=\frac{p-1}{u+1} |\nabla u|^p +u^{p-1}(\log(u+1))^q$, where $q>0$  (cf. Example \ref{e5});\\

\item[(vi)] $-\Delta_p u= C|\nabla u|^p +u^{r-1}$, where $C>0$, and $p<r$ (cf. Example \ref{e6});\\

\item[(vii)] $-\Delta_p u= C|\nabla u|^p +(\log(u+1))^{r-1}$, where $C>0$, and $p<r$ (cf. Example \ref{e6}).\\ 

\end{enumerate}

We remark, in equation (vi), that if $C=0$ and $r= p^*-1$ then $(P)$ reduces to the Pohozaev problem, which has no solution when $\Omega$ is starshaped. In the case $p=2$, the famous result of Brezis-Nirenberg \cite{BrNi} states that the existence of a solution can be recovered if one adds a perturbation such as $\lambda u$. This result was generalized to the case $1<p^2\leq N$ by Garcia-Peral \cite{GaPe}, Egnell \cite{Eg} and Guedda-Veron \cite{GueVe}. It follows from our Theorem \ref{t2} that the existence of a solution can also be recovered in the spirit of Brezis-Nirenberg  with a perturbation such as $C|\nabla u|^p$.

It is also worthwhile to compare examples (i) and (vi), which differ only by the presence of an exponential term: the Ambrosetti-Rabinowitz condition holds for (i), but not for (vi).

Theorems \ref{t1} and \ref{t2} are stated in Subsection 2.1.

Our third result concerns the case where $h(x,s)$ is $p$-sublinear at zero and is stated in Subsection 2.2.  We introduce a parameter $\lambda>0$ in $(P)$:
$$
\left\{
\begin{array}{ll}
-\Delta_p u = g(u)|\nabla u|^p + \lambda f(x,u) \
& \mbox{in} \ \ \Omega, \ \ \\
u>0 \
&\mbox{in} \ \ \Omega, \ \ \\
 u = 0 \ &\mbox{on} \ \
\partial\Omega.
\end{array}
 \right.\leqno{(P_\lambda)}
$$

The transformed problem thus reads
$$
\left\{
\begin{array}{ll}
-\Delta_p v = \lambda h(x,v) \
& \mbox{in} \ \ \Omega, \ \ \\
v>0 \
&\mbox{in} \ \ \Omega, \ \ \\
 v = 0 \ &\mbox{on} \ \
\partial\Omega.
\end{array}
 \right.\leqno{(Q_\lambda)}
$$

 For a nonlinearity of concave-convex type we obtain for $(P_\lambda)$ a result in the line of the classical one by Ambrosetti-Brezis-Cerami \cite{ABC}: for some $0<\Lambda<\infty$, $(P_\lambda)$ admits at least two solutions for $0<\lambda<\Lambda$, at least one solution for $\lambda=\Lambda$, and no solution for $\lambda>\Lambda$. Precise statements are given in Theorems \ref{t3} and \ref{t4}, which are obtained by applying some of the results of \cite{DGU3}. Here are a few equations which can be handled by Theorems \ref{t3} and \ref{t4}:\\

\begin{enumerate}
\item[(viii)] $-\Delta_p u= C_1 |\nabla u|^p + \lambda u^q e^{C_2 u}$, where $C_1>0$, $0<C_2<C_1 \frac{p^*-p}{p-1}$ and $0\leq q < p-1$;\\

\item[(ix)]  $-\Delta_p u= C (1+u)^{-\alpha} |\nabla u|^p +\lambda\left(u^{r-1}+ u^{q-1}\right)$, where $C>0$, $\alpha \geq 1$, $1<q<p<r<p^*$ and, in addition, $C<r-p$ if $\alpha=1$;\\
\end{enumerate}

It is valuable to compare examples (i), (iii) with examples (viii), (ix), respectively: at least one solution for the first ones, at least two solutions for the second ones.


To conclude this introduction, let us comment on some related works.
The change of variables introduced for $p=2$ by Kazdan-Kramer \cite{KK} has been used, for instance, by Montenegro-Montenegro \cite{MoMo}, Iturriaga- Lorca-Ubilla \cite{ILU}, and Iturriaga-Lorca-S\'anchez \cite{ILS}. In \cite{MoMo} the authors obtain existence of solutions for some specific functions $g$, for instance $g$ constant or $g$ such that $\displaystyle \lim_{s \to \infty} g(s)=0$, cf. Examples 2.3, 3.1 and 4.1 in \cite{MoMo}. In \cite{ILU} the authors prove the existence of a solution when
$g$ is a constant and $f(x,s)$ has at most an exponential growth\,(compare with our example (i)). In \cite{ILS} the authors assume that $g$ is a constant and the model for $f(x,s)$ is a power\,(compare with our example (vii)).

Some situations where it is not possible to use the  change of variables of Kazdan-Kramer have also been considered, for instance, by Li-Yen-Ke \cite{LiYenKe} (see also the references therein), where a similar problem to $(P)$ is studied. However, the function $f$ may also depend  on the gradient.  They consider the case $g(s)=\frac{c}{s+1}$, with a power as a typical model for $f$ (this is related with our example (iii)). 

Let us finally observe that the results in this paper are new even in the case $p=2$.

\medskip
 \section{\textbf{Statements of results}}
\medskip
 
 Throughout this paper, the functions $f$ and $g$ are assumed to satisfy the following conditions:
 
 \begin{itemize}
\item [$(H_g)$] $g:[0,\infty) \rightarrow [0,\infty)$ is continuous.
\item  [$(H_f)$] $f:\Omega \times [0,\infty) \rightarrow [0,\infty)$ is a Carath\'eodory function such that $f(x,s)$ remains bounded when $s$ remains bounded.
\end{itemize}

By a solution of $(P)$ we mean $u \in \mathcal{C}^1(\overline{\Omega})$ such that $u>0$ in $\Omega$, $u=0$ on $\partial \Omega$, and $u$ satisfies the equation in $(P)$ in the weak sense.

Our first result involves the following four assumptions on $g$ and $f$:\\

 \begin{itemize}
\item [$(H_{SC})$] There exists $r<p^*$ such that $$ \lim_{s \to \infty} \frac{ f(x,s)e^{G(s)}}{\left( \int_0^s e^{\frac{G(t)}{p-1}} dt\right)^{r-1}}=0  $$
uniformly with respect to $x \in \Omega$.\\
\item  [$(H_{AR})_1$] There exist $s_0 \geq 0$ and $\theta>p$ such that $$ 
\theta \int_0^s e^{\frac{p}{p-1}G(t)} f(x,t) dt \leq e^{G(s)} f(x,s)\int_0^s e^{\frac{G(t)}{p-1}}  dt$$
for {\it a.e.} $x \in \Omega$ and all $s \geq s_0$.\\

\item  [$(H_{AR})_2$] There exist $s_0 \geq 0$, $\delta>0$ and a non-empty subdomain $\Omega_1 \subset \Omega$ such that $$ 
\int_0^s e^{\frac{p}{p-1}G(t)}  f(x,t) dt \geq \delta$$
for {\it a.e.} $x \in \Omega_1$ and all $s \geq s_0$.\\

\end{itemize}

As will be seen in the next section, $(H_{SC})$ corresponds to a subcritical growth condition for the transformed problem $(Q)$ (cf. Lemma \ref{l1}),  $(H_{AR})_1$ and $(H_{AR})_2$ correspond to the Ambrosetti-Rabinowitz condition for $(Q)$ (cf. Lemma \ref{l2}). Note that $(H_{SC})$, $(H_{AR})_1$ and $(H_{AR})_2$ concern the behavior of $f$ at infinity. Note also that if $g \equiv 0$, then $(H_{SC})$ reduces to the standard subcritical growth condition for $f(x,s)$ while $(H_{AR})_1$ and $(H_{AR})_2$ reduce to the standard Ambrosetti-Rabinowitz condition for $f(x,s)$.

\medskip
\subsection{{\bf The case $f(x,s)$ $p$-superlinear at zero}}
\strut
\medskip

We recall that $f(x,s)$ is $p$-superlinear at zero when it satisfies the following condition:

\begin{itemize}
\item [$(H_{\lambda_1})$] $\displaystyle \limsup_{s \to 0} \frac{f(x,s)}{s^{p-1}} <\lambda_1$ uniformly with respect to $x \in \Omega$.\\
\end{itemize}

We shall see that under this condition $h(x,s)$ is $p$-superlinear at zero as well (cf. Lemma \ref{l3}). Our first result in this case is:

 \begin{thm}[Existence with the Ambrosetti-Rabinowitz condition] 
 \label{t1}
Assume  $(H_{SC})$, $(H_{AR})_1$, $(H_{AR})_2$ and $(H_{\lambda_1})$. Then problem $(P)$ has at least one solution.
 \end{thm}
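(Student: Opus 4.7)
The plan is to apply the Kazdan--Kramer type change of variables $v=A(u)$ to transform $(P)$ into the semilinear problem $(Q)$ with nonlinearity $h(x,s)=e^{G(A^{-1}(s))}f(x,A^{-1}(s))$, and then solve $(Q)$ variationally via the mountain pass theorem. Because $A:[0,\infty)\to[0,\infty)$ is a $\mathcal{C}^1$-diffeomorphism with $A'>0$, any $\mathcal{C}^1$ positive solution $v$ of $(Q)$ will yield a $\mathcal{C}^1$ positive solution $u=A^{-1}(v)$ of $(P)$, and vice versa, so the whole game is to find $v$.

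Once $(Q)$ is set up, I would work with the $\mathcal{C}^1$ functional
\begin{equation*}
J(v)=\frac{1}{p}\int_\Omega |\nabla v|^p\,dx-\int_\Omega H(x,v)\,dx,\qquad H(x,s)=\int_0^s h(x,t)\,dt,
\end{equation*}
on $W_0^{1,p}(\Omega)$, extending $h(x,s)$ by $0$ for $s<0$ so that any nontrivial critical point is automatically nonnegative by testing against $v^-$, and then strictly positive in $\Omega$ by the strong maximum principle applied to the $p$-Laplacian. The three steps to carry out are: (a) translate the hypotheses on $f$ into hypotheses on $h$ via Lemmas \ref{l1}, \ref{l2}, \ref{l3}, namely subcritical growth, the Ambrosetti--Rabinowitz condition in its standard form, and $p$-superlinearity at zero; (b) verify the mountain pass geometry: the bound $\limsup_{s\to 0^+}h(x,s)/s^{p-1}<\lambda_1$ coming from $(H_{\lambda_1})$ gives, together with the subcritical growth, a uniform lower bound $J(v)\ge \alpha>0$ on a small sphere $\|v\|=\rho$, while the Ambrosetti--Rabinowitz condition from $(H_{AR})_1$, $(H_{AR})_2$ yields $H(x,s)\ge c\,s^\theta$ on a subdomain with $\theta>p$, so that $J(tv_0)\to-\infty$ for any fixed nonnegative $v_0\not\equiv 0$ with support in $\Omega_1$; (c) verify the Palais--Smale condition at the mountain pass level using, as usual, that any $(PS)_c$ sequence is bounded (this is where Ambrosetti--Rabinowitz is used, combined with $\theta>p$), and then apply standard $S_+$-type arguments for the $p$-Laplacian to upgrade weak convergence to strong convergence in $W_0^{1,p}(\Omega)$.

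Once a nontrivial critical point $v\in W_0^{1,p}(\Omega)$ is produced by the mountain pass theorem, I would apply Moser iteration / Serrin--Trudinken estimates (using the subcritical growth $h(x,s)\le C(1+s^{r-1})$, $r<p^*$, from $(H_{SC})$) to obtain $v\in L^\infty(\Omega)$, then the DiBenedetto/Tolksdorf regularity theory to get $v\in\mathcal{C}^{1,\alpha}(\overline\Omega)$, and the Vázquez strong maximum principle to conclude $v>0$ in $\Omega$. Finally, setting $u:=A^{-1}(v)$ and using the chain rule one verifies directly that $u$ is a classical $\mathcal{C}^1(\overline\Omega)$ solution of $(P)$.

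The main obstacle I expect is the verification of the Palais--Smale condition, because the transformed nonlinearity $h$ is typically of exponential type (see Remark \ref{rexp}) and one must make sure that the Ambrosetti--Rabinowitz condition really does give boundedness of $(PS)_c$ sequences uniformly in the large-gradient contribution; here the precise formulation of $(H_{AR})_1$, which is designed so that the translated integral inequality $\theta H(x,s)\le s\,h(x,s)$ holds for $s$ large, is exactly what is needed. A close second obstacle is checking that the transformation $v\mapsto u=A^{-1}(v)$ preserves $\mathcal{C}^1$ regularity up to the boundary even when $g$ grows, so that $u$ is indeed a solution in the sense of the paper; this follows because $A^{-1}$ is smooth on the range of $v$ (which is bounded) and $A^{-1}(0)=0$.
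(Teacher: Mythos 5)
Your proposal is correct and follows essentially the same route as the paper: the Kazdan--Kramer change of variables $v=A(u)$ reducing $(P)$ to $(Q)$, translation of the hypotheses via Lemmas \ref{l1}--\ref{l3}, the mountain pass theorem with the Ambrosetti--Rabinowitz condition ensuring boundedness of Palais--Smale sequences and the $(S)_+$ property giving compactness, followed by $L^\infty$ and $C^{1,\alpha}$ regularity and the V\'azquez strong maximum principle. The only cosmetic difference is that you call $(Q)$ ``semilinear'' when it is still quasilinear ($p$-Laplacian), which does not affect the argument.
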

 
 Theorem \ref{t1} will be proved in Section 5, and its corollaries in Section 4.
 

 To illustrate Theorem \ref{t1}, let us indicate a few typical situations where it applies. We will distinguish a number of cases accordingly to the behaviour of $g$ at infinity. The corollaries and examples below provide various concrete situations where problem $(P)$ can be handled in a variational way, with the standard Ambrosetti-Rabinowitz condition being satisfied.
 
To simplify our statements, we assume a differentiability  condition on $f$:\\
 
 \begin{itemize}
\item [$(H_f)'$] There exist $s_0 \geq 0$ and $\varepsilon>0$ such that $ f(x,s)\geq \varepsilon$ and $\frac{\partial f}{\partial s}(x,s)$ exists for 
for {\it a.e.} $x \in \Omega$ and all $s \geq s_0$. This derivative will be denoted by $f'(x,s)$.\\
\end{itemize}

\begin{cor}\label{c1}
Assume $g(s) \to g_{\infty}$ as $s \to \infty$ with $0<g_\infty<\infty$, as well as $(H_f)'$ and $(H_{\lambda_1})$. Then $(P)$ has at least one solution  if, for some $p<r<p^*$, $$
\lim_{s \to \infty} \frac{f(x,s)}{e^{\frac{r-p}{p-1} G(s)}} =0 \quad \mbox{and} \quad \lim_{s \to \infty} \frac{f'(x,s)}{f(x,s)} >0,
$$
uniformly with respect to $x \in \Omega$.
\end{cor}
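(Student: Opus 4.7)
The plan is to invoke Theorem \ref{t1}: since $(H_{\lambda_1})$ is already assumed, it suffices to verify the three conditions $(H_{SC})$, $(H_{AR})_1$ and $(H_{AR})_2$ from the stated asymptotic hypotheses on $f$, $f'$ and $g$. The key preliminary observation is that $g_{\infty}\in(0,\infty)$ implies $G(s)\sim g_{\infty}s$ as $s\to\infty$, and hence, by L'Hôpital,
\begin{equation*}
\lim_{s\to\infty}\frac{A(s)}{e^{G(s)/(p-1)}}=\lim_{s\to\infty}\frac{e^{G(s)/(p-1)}}{\tfrac{g(s)}{p-1}e^{G(s)/(p-1)}}=\frac{p-1}{g_{\infty}}.
\end{equation*}
Thus $A(s)^{r-1}$ is comparable to $e^{(r-1)G(s)/(p-1)}$ for large $s$, so
\begin{equation*}
\frac{f(x,s)e^{G(s)}}{A(s)^{r-1}}\sim \frac{g_{\infty}^{r-1}}{(p-1)^{r-1}}\cdot\frac{f(x,s)}{e^{(r-p)G(s)/(p-1)}},
\end{equation*}
and the first limit hypothesis yields $(H_{SC})$ with the prescribed $r<p^{*}$.

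For $(H_{AR})_2$, I would use $(H_f)'$: there is $s_0$ with $f(x,s)\geq\varepsilon$ for $s\geq s_0$, hence
\begin{equation*}
\int_0^{s}e^{pG(t)/(p-1)}f(x,t)\,dt\geq \varepsilon(s-s_0)\to\infty,
\end{equation*}
uniformly in $x\in\Omega$, so $(H_{AR})_2$ holds with $\Omega_1=\Omega$.

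The main work, and the step I expect to be the real obstacle, is $(H_{AR})_1$. I would pass through the transformed nonlinearity $h(x,v)=e^{G(A^{-1}(v))}f(x,A^{-1}(v))$ and check the equivalent statement $\theta H(x,v)\leq v h(x,v)$ for $v$ large, where $H(x,v)=\int_0^{v}h(x,\tau)d\tau$. A direct computation with $s=A^{-1}(v)$ and $ds/dv=e^{-G(s)/(p-1)}$ gives
\begin{equation*}
\frac{v\,\partial_{v}h(x,v)}{h(x,v)}=v\,e^{-G(s)/(p-1)}\left(g(s)+\frac{f'(x,s)}{f(x,s)}\right),
\end{equation*}
and the asymptotics $v\,e^{-G(s)/(p-1)}\to (p-1)/g_{\infty}$, together with $g(s)\to g_{\infty}$ and the assumption $\liminf f'/f\geq c>0$, yield
\begin{equation*}
\liminf_{v\to\infty}\frac{v\,\partial_{v}h(x,v)}{h(x,v)}\geq (p-1)+\frac{(p-1)c}{g_{\infty}}>p-1,
\end{equation*}
uniformly in $x$. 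Picking $\delta>0$ so that $v\,\partial_{v}h/h\geq p-1+\delta$ for $v\geq v_{0}$, an integration by parts on $\int_{v_{0}}^{v}h$ gives
\begin{equation*}
(p+\delta)\int_{v_{0}}^{v}h(x,\tau)\,d\tau\leq v\,h(x,v)-v_{0}h(x,v_{0}).
\end{equation*}
Since $h(x,v)$ grows at least like $v^{p-1+\delta}$ (by integrating the logarithmic-derivative bound), the bounded contribution from $[0,v_{0}]$ and the term $v_{0}h(x,v_{0})$ can be absorbed into $v\,h(x,v)$ at the cost of slightly shrinking $\delta$, producing $(p+\delta')H(x,v)\leq v\,h(x,v)$ for $v\geq v_{1}$, i.e.\ the Ambrosetti--Rabinowitz condition for $(Q)$ with exponent $\theta=p+\delta'>p$. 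Translating back through the change of variables gives $(H_{AR})_1$, and Theorem \ref{t1} then delivers the solution of $(P)$.
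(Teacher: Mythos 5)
Your proposal is correct and follows essentially the same route as the paper: the paper deduces Corollary \ref{c1} from Theorem \ref{t1} via Proposition \ref{peq} (which verifies $(H_{SC})$ and the condition $(H_{AR})'$ under $g\to g_\infty\in(0,\infty)$) and Proposition \ref{thlim} (which shows $(H_f)'$ and $(H_{AR})'$ imply $(H_{AR})_1$ and $(H_{AR})_2$ with $\Omega_1=\Omega$). Your quantity $v\,\partial_v h/h$ is exactly the left-hand side of $(H_{AR})'$ rewritten in the variable $v=A(s)$, and your integration by parts is just an explicit (and in fact more careful) version of the L'H\^opital step in the proof of Proposition \ref{thlim}.
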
 
 
\begin{exm}
\label{e1}
{\rm Corollary \ref{c1} applies, for instance, to $g(s) \equiv C_1$ and $f(s)=s^q e^{C_2 s}$ with $C_1>0$, $0<C_2< C_1 \frac{p^*-p}{p-1}$ and $q>p-1$. This corresponds to example (i) from the Introduction.}
\end{exm}

 \begin{cor}
 \label{c2}
Assume $g(s) \to 0$ as $s \to \infty$, as well as $(H_f)'$ and $(H_{\lambda_1})$. Assume also that there exists $s_0>0$ such that $g(s)>0$ and $g'(s)$ exists for $s \geq s_0$, with moreover   
\begin{equation}
\label{gc2}
\frac{g'(s)}{g(s)^2} \to 0 \quad \text{as} \quad s \to \infty.
\end{equation}
Then $(P)$ has at least one solution if, for some $r \in (p,p^*)$, \begin{equation}
\lim_{s \to \infty} \frac{f(x,s)g(s)^{r-1}}{e^{\frac{r-p}{p-1} G(s)}} =0 \quad \mbox{and} \quad \lim_{s \to \infty} \frac{f'(x,s)}{f(x,s)g(s)} >0
\end{equation}
uniformly with respect to $x \in \Omega$.
\end{cor}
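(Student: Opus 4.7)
The plan is to reduce Corollary \ref{c2} to Theorem \ref{t1} by verifying the three conditions $(H_{SC})$, $(H_{AR})_1$ and $(H_{AR})_2$, using asymptotic analysis of the integrals appearing in those conditions. The condition $(H_{\lambda_1})$ is already among the hypotheses, so only the growth and Ambrosetti--Rabinowitz conditions require work.

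The cornerstone is an asymptotic estimate for $A(s)=\int_0^s e^{G(t)/(p-1)}\,dt$. Since $g(s)>0$ for $s$ large and $g'(s)/g(s)^2 \to 0$, differentiating $\frac{p-1}{g(s)}e^{G(s)/(p-1)}$ gives
\[
\frac{d}{ds}\!\left[\frac{p-1}{g(s)}e^{G(s)/(p-1)}\right]=\left(1-\frac{(p-1)g'(s)}{g(s)^2}\right)e^{G(s)/(p-1)},
\]
so L'Hôpital's rule (both numerator and denominator tend to $+\infty$ because $g\to 0$ forces $A(s)\to\infty$) yields the asymptotic $A(s)\sim \frac{p-1}{g(s)}e^{G(s)/(p-1)}$ as $s\to\infty$. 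Plugging this into $(H_{SC})$ and simplifying the exponents via $G(s)-\frac{r-1}{p-1}G(s)=-\frac{r-p}{p-1}G(s)$ shows that $(H_{SC})$ is equivalent, up to the factor $(p-1)^{r-1}g(s)^{1-r}$, to
\[
\lim_{s\to\infty}\frac{f(x,s)g(s)^{r-1}}{e^{(r-p)G(s)/(p-1)}}=0,
\]
which is precisely the first hypothesis of the corollary.

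Condition $(H_{AR})_2$ follows immediately from $(H_f)'$: for $s\geq s_0$ one has $f(x,t)\geq\varepsilon$ and $e^{pG(t)/(p-1)}\geq 1$, so $\int_0^s e^{pG(t)/(p-1)}f(x,t)\,dt\geq \varepsilon(s-s_0)$, eventually bounded below by any $\delta>0$ on all of $\Omega$.

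The real work is $(H_{AR})_1$. I will apply L'Hôpital a second time to the quotient
\[
\frac{\int_0^s e^{pG(t)/(p-1)}f(x,t)\,dt}{e^{G(s)}f(x,s)\,A(s)}.
\]
Using $A(s)\sim \frac{p-1}{g(s)}e^{G(s)/(p-1)}$, the denominator is asymptotic to $\frac{p-1}{g(s)}f(x,s)e^{pG(s)/(p-1)}$, whose derivative computes to
\[
f(x,s)e^{pG(s)/(p-1)}\!\left[\,p+(p-1)\frac{f'(x,s)}{f(x,s)g(s)}-\frac{(p-1)g'(s)}{g(s)^2}\,\right].
\]
The numerator derivative is $f(x,s)e^{pG(s)/(p-1)}$. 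Since $g'/g^2\to 0$ and $f'/(fg)$ has a strictly positive limit $c>0$ (uniformly in $x$), the bracket tends to $p+(p-1)c>p$, so the quotient tends to $\frac{1}{p+(p-1)c}<\frac{1}{p}$. Choosing any $\theta\in\bigl(p,\,p+(p-1)c\bigr)$ then gives $(H_{AR})_1$ for $s$ large, uniformly in $x$. Theorem \ref{t1} concludes.

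The main obstacle is the double application of L'Hôpital: one must check that $A(s)$ and the other integrals indeed diverge (so L'Hôpital is applicable), and that the uniformity in $x$ of the two limiting hypotheses transfers to uniformity of the resulting ratios. The first point uses only $g(s)\to 0$; the second is where the two uniform limits in the statement of the corollary are essential, and where the precise form of the hypothesis $g'/g^2\to 0$ is exactly what is needed to kill the boundary term arising from differentiating $1/g(s)$.
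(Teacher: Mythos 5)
Your proof is correct and is essentially the paper's own argument: the paper factors it through the intermediate condition $(H_{AR})'$ (Proposition \ref{thlim} reduces $(H_{AR})_1$, $(H_{AR})_2$ to $(H_{AR})'$ via the same L'H\^opital quotient, and Proposition \ref{peq2} uses the same asymptotic $A(s)\sim\frac{p-1}{g(s)}e^{G(s)/(p-1)}$ derived from $g'/g^2\to 0$ to translate $(H_{SC})$ and $(H_{AR})'$ into the two displayed hypotheses), whereas you inline these steps into one computation before invoking Theorem \ref{t1}. The only cosmetic caveat is that you differentiate the asymptotic equivalent of the denominator rather than the denominator itself; this is legitimate here (replace the denominator by its equivalent before applying L'H\^opital), and indeed your limiting bracket agrees with the paper's $\frac{A(s)}{e^{G(s)/(p-1)}}\bigl(g(s)+\frac{f'(x,s)}{f(x,s)}\bigr)+1\to p+(p-1)c$.
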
 

\begin{exm} \label{e2}
{\rm Corollary \ref{c2} applies, for instance, to $g(s)=C(1+s)^{-\alpha}$ and $f(x,s)=\lambda s^{p-1} e^{\beta s^{1-\alpha}}$,
with $C>0$, $0<\alpha<1$, $0<\beta<\frac{p^*-p}{p-1} \frac{1}{1-\alpha}$ and $\lambda<\lambda_1$. This corresponds to example (ii) of the Introduction.}
\end{exm}

Note that \eqref{gc2} implies $sg(s) \to \infty$ as $s \to \infty$. We are thus dealing in Corollary \ref{c2} with a situation where $g(s) \to 0$ but $sg(s) \to \infty$ as $s \to \infty$. In Corollary \ref{c3} below, we will consider a situation where $g(s) \to 0$ but $sg(s) \to c$ with $0\leq c <\infty$ as $s \to \infty$.

\begin{cor}
\label{c3}
Assume  $g(s) \to 0$ and $sg(s) \to c$ as $s \to \infty$, with $0\leq c<\infty$. Assume also  $(H_f)'$ and $(H_{\lambda_1})$. Then $(P)$ has at least one solution if, for some $p<r<p^*$, \begin{equation} \label{fc3}
\lim_{s \to \infty} \frac{f(x,s)}{s^{r-1}} =0 \quad \mbox{and} \quad \lim_{s \to \infty} \frac{sf'(x,s)}{f(x,s)} >p-1+c
\end{equation}
uniformly with respect to $x \in \Omega$.
\end{cor}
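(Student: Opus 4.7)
The plan is to apply Theorem~\ref{t1} by verifying its four hypotheses $(H_{SC})$, $(H_{AR})_1$, $(H_{AR})_2$ and $(H_{\lambda_1})$; the last is assumed outright. The starting point is an asymptotic formula for $A$ obtained by L'Hopital's rule: since $sg(s)\to c$,
\begin{equation*}
\lim_{s\to\infty}\frac{A(s)}{s\,e^{G(s)/(p-1)}}=\lim_{s\to\infty}\frac{1}{1+sg(s)/(p-1)}=\frac{p-1}{c+p-1}=:K,
\end{equation*}
so $A(s)\sim K s\,e^{G(s)/(p-1)}$ as $s\to\infty$.

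Using this, $(H_{SC})$ with the same $r$ as in the corollary follows from
\begin{equation*}
\frac{f(x,s)\,e^{G(s)}}{A(s)^{r-1}}\sim K^{-(r-1)}\,\frac{f(x,s)}{s^{r-1}}\,e^{(p-r)G(s)/(p-1)},
\end{equation*}
where the exponential factor is bounded by $1$ because $G\geq 0$ and $p-r<0$; the hypothesis $f(x,s)/s^{r-1}\to 0$ then delivers the required uniform limit. For $(H_{AR})_2$, the bound $f(x,t)\geq\varepsilon$ from $(H_f)'$ immediately yields $\int_0^s e^{pG(t)/(p-1)}f(x,t)\,dt\geq\varepsilon$ for all $s\geq s_0+1$, uniformly in $x\in\Omega$.

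The heart of the argument is $(H_{AR})_1$. For $\theta>p$ to be chosen, set
\begin{equation*}
\psi(x,s):=e^{G(s)}f(x,s)A(s)-\theta\int_0^s e^{pG(t)/(p-1)}f(x,t)\,dt;
\end{equation*}
a direct differentiation gives
\begin{equation*}
\partial_s\psi(x,s)=e^{pG(s)/(p-1)}f(x,s)\left[\frac{A(s)}{e^{G(s)/(p-1)}}\left(g(s)+\frac{f'(x,s)}{f(x,s)}\right)+1-\theta\right].
\end{equation*}
The asymptotic for $A$ together with $sg(s)\to c$ and the uniform lower bound $sf'(x,s)/f(x,s)>p-1+c$ shows that the bracket tends to $K(c+L)+1-\theta$ with $L>p-1+c$; since $K(c+L)+1>p$ is equivalent to $L>p-1$, we may choose $\theta>p$ close enough to $p$ so that the bracket stays above some $\eta>0$ for $s\geq s_1$, uniformly in $x$. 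Combined with $f\geq\varepsilon$ from $(H_f)'$, this yields $\partial_s\psi\geq\varepsilon\eta>0$ for $s\geq s_1$ uniformly; since $\psi(x,s_1)$ is uniformly bounded by $(H_f)$, integrating produces $\psi(x,s)\geq 0$ for $s$ larger than some $s_0$, uniformly in $x\in\Omega$. This is $(H_{AR})_1$, and Theorem~\ref{t1} concludes.

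The main technical hurdle is the uniform asymptotic analysis in the last step, which relies crucially on the uniformity in $x$ of the convergences $sg(s)\to c$ and $sf'(x,s)/f(x,s)>p-1+c$ supplied in the hypotheses.
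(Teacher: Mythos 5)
Your proof is correct and follows essentially the same route as the paper: the paper also reduces Corollary \ref{c3} to Theorem \ref{t1} by checking $(H_{SC})$, $(H_{AR})_1$ and $(H_{AR})_2$, via Proposition \ref{peq3} (whose part (1) is exactly your $(H_{SC})$ computation, using $A(s)\sim \frac{p-1}{p-1+c}\,s\,e^{G(s)/(p-1)}$ and $e^{(p-r)G/(p-1)}\le 1$) combined with Proposition \ref{thlim}. The only cosmetic difference is that you establish $(H_{AR})_1$ by integrating the derivative of $\psi$ from a uniform positive lower bound, whereas the paper gets the same inequality by applying L'Hospital's rule to the quotient $e^{G}fA\big/\int_0^s e^{pG/(p-1)}f$ (the derivative appearing there is precisely your $\partial_s\psi$ computation), so the substance is identical.
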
 

\begin{exm} \label{e3}
{\rm Corollary \ref{c3} applies, for instance, to $g(s)=C(1+s)^{-\alpha}$ and $f(x,s)=s^{r-1}$, where $C>0$, $\alpha \geq 1$,  $p<r<p^*$ with $C<r-p$ if $\alpha=1$. This corresponds to example (iii) of the Introduction.}
\end{exm} 

\begin{cor}
\label{c4}
Assume $g(s) \to \infty$ as $s \to \infty$ as well as $(H_f)'$ and $(H_{\lambda_1})$. Assume also the existence of $s_0 \geq 0$ and $C<\infty$ such that $g(s)>0$, $g'(s)$ exists and $\left| \frac{g'(s)}{g(s)}\right| \leq C$ for $s\geq s_0$. Then $(P)$ has at least one solution if, for some $p<r<p^*$, 
\begin{equation}\label{fc4}
\lim_{s \to \infty} \frac{f(x,s)}{e^{\frac{r-p}{p-1} G(s)}} =0 \quad \mbox{and} \quad \lim_{s \to \infty} \frac{f'(x,s)}{f(x,s)g(s)} >0
\end{equation}
uniformly with respect to $x \in \Omega$.
\end{cor}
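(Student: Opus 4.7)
My plan is to deduce Corollary \ref{c4} from Theorem \ref{t1}. Since $(H_{\lambda_1})$ is given, the task is to verify $(H_{SC})$, $(H_{AR})_1$, and $(H_{AR})_2$. The engine of the whole argument is the asymptotic
\[
A(s) \;\sim\; \frac{(p-1)\, e^{G(s)/(p-1)}}{g(s)} \qquad (s \to \infty),
\]
which I would obtain by L'H\^opital's rule applied to the quotient of $A(s)$ with its proposed equivalent, noting that the hypotheses $g(s)\to\infty$ and $|g'(s)/g(s)|\leq C$ force $g'(s)/g(s)^2\to 0$. I would also record two consequences of these hypotheses: $\log g(s)=O(s)$ (from $|(\log g)'|\leq C$) and $G(s)/s\to\infty$ (from $g\to\infty$).

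For $(H_{SC})$, pick any $r'\in(r, p^*)$, insert the asymptotic into $A(s)^{r'-1}$, and factor
\[
\frac{f(x,s)\,e^{G(s)}}{A(s)^{r'-1}} \;\sim\; \frac{1}{(p-1)^{r'-1}}\cdot \frac{f(x,s)}{e^{\frac{r-p}{p-1} G(s)}} \cdot \frac{g(s)^{r'-1}}{e^{\frac{r'-r}{p-1} G(s)}}.
\]
The first nontrivial factor tends to $0$ uniformly in $x$ by the first limit in \eqref{fc4}, and the second does so because $(r'-1)\log g(s)-\tfrac{r'-r}{p-1}G(s)\to -\infty$ by the growth estimates above. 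Condition $(H_{AR})_2$ is then immediate with $\Omega_1=\Omega$: by $(H_f)'$ we have $f(x,s)\geq\varepsilon$ for $s\geq s_0$ a.e.\ in $\Omega$, hence $\int_0^s e^{\frac{p}{p-1}G(t)}f(x,t)\,dt \geq \varepsilon\int_{s_0}^{s_0+1}e^{\frac{p}{p-1}G(t)}\,dt>0$ for all $s\geq s_0+1$.

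The main obstacle is $(H_{AR})_1$, where I must exhibit $\theta>p$. Setting $\Phi(x,s):=e^{G(s)}f(x,s)A(s)$ and $\Psi(x,s):=\int_0^s e^{\frac{p}{p-1}G(t)}f(x,t)\,dt$, so $\Psi'(x,s)=e^{\frac{p}{p-1}G(s)}f(x,s)$, one computes
\[
\Phi'(x,s) = e^{G(s)}A(s)\bigl[g(s)f(x,s)+f'(x,s)\bigr]+\Psi'(x,s).
\]
The second limit in \eqref{fc4} supplies $\gamma>0$ and $s_1$ with $f'(x,s)\geq\gamma f(x,s)g(s)$ for $s\geq s_1$; together with $A(s)g(s)\sim (p-1)e^{G(s)/(p-1)}$ this yields $\Phi'(x,s)\geq \bigl[1+(1-\eta)(p-1)(1+\gamma)\bigr]\Psi'(x,s)$ for any preassigned $\eta>0$ and all sufficiently large $s$. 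For $\eta$ small the bracket exceeds $p$, so $\theta\Psi'\leq\Phi'$ holds for some $\theta\in\bigl(p,\,p+(p-1)\gamma\bigr)$ and every $s\geq s_2$. Integrating from $s_2$, using $\Phi(x,s)\to\infty$ uniformly in $x$ (because $e^G\to\infty$, $A\to\infty$, and $f\geq\varepsilon$), together with the uniform bounds on $\Phi(x,s_2)$ and $\Psi(x,s_2)$ granted by $(H_f)$, I absorb the resulting additive constant at the cost of slightly shrinking $\theta$; the slack $(p-1)\gamma$ provided by the monotonicity hypothesis $f'/(fg)>0$ at infinity is precisely what keeps the final exponent strictly above $p$.
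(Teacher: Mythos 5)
Your proof is correct and rests on the same engine as the paper's: the reduction to Theorem \ref{t1} together with the L'H\^opital asymptotic $g(s)\int_0^s e^{G(t)/(p-1)}\,dt \sim (p-1)e^{G(s)/(p-1)}$, which is exactly the computation in Proposition \ref{peq4}. The packaging differs in two places. For $(H_{AR})_1$ the paper routes through the intermediate condition $(H_{AR})'$ and a second application of L'H\^opital (Proposition \ref{thlim}) to the quotient of $e^{G(s)}f(x,s)\int_0^s e^{G(t)/(p-1)}dt$ by $\int_0^s e^{pG(t)/(p-1)}f(x,t)dt$; you instead prove the differential inequality $\Phi'\geq\theta\Psi'$ and integrate, absorbing the constant by the uniform divergence of $\Phi$ — this is essentially L'H\^opital unpacked, but it has the merit of making the uniformity in $x$ and the quantitative slack $\theta\in(p,\,p+(p-1)\gamma)$ explicit, which the paper leaves implicit. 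For $(H_{SC})$ your passage to an auxiliary $r'\in(r,p^*)$ and the estimate $(r'-1)\log g(s)-\tfrac{r'-r}{p-1}G(s)\to-\infty$ is a clean one-directional argument that sidesteps the slightly delicate ``if and only if'' manipulation in Proposition \ref{peq4}(1) (where the factor $g'(s)/g(s)$ may tend to zero); since only the sufficiency direction is needed for the corollary, this is a legitimate and arguably tidier route. No gaps.
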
 

\begin{exm} \label{e4}
{\rm Corollary \ref{c4} applies, for instance, to $g(s)=s^q$ and $f(x,s)=\lambda s^{p-1} e^{\beta s^{q+1}}$,
where $q>0$, $0<\beta<\frac{p^*-p}{(p-1)(q+1)}$, and $0<\lambda<\lambda_1 e^{-\beta}$. This corresponds to example (iv) of the Introduction.}
\end{exm}

Our second result in the case where $f(x,s)$ is $p$-superlinear at zero involves the following two assumptions:

 \begin{itemize}
\item [$(H_m)$] There exist $s_0 \geq 0$ such that for {\it a.e.} $x \in \Omega$ the function
$$ s \mapsto \frac{e^{G(s)}f(x,s)}{\left(\int_0^s e^{\frac{G(t)}{p-1}} dt\right)^{p-1}}$$
is nondecreasing on $[s_0,\infty)$.\\

\item  [$(H_{\infty})$] $\displaystyle \lim_{s \to \infty} \frac{e^{G(s)}f(x,s)}{\left(\int_0^s e^{\frac{G(t)}{p-1}} dt\right)^{p-1}}=\infty$ uniformly with respect to $x \in \Omega$.
\end{itemize}
 
As we will see from formulas \eqref{defa} and \eqref{defh}, the quotient $\frac{e^{G(s)}f(x,s)}{\left(\int_0^s e^{\frac{G(t)}{p-1}} dt\right)^{p-1}}$ is, up to a change of variable, equal to $\frac{h(x,s)}{s^{p-1}}$, so that $(H_m)$ corresponds to a monotonicity condition for $\frac{h(x,s)}{s^{p-1}}$, while $(H_{\infty})$ corresponds to a $p$-superlinearity condition at infinity for $h(x,s)$.

\begin{thm}
\label{t2}{\rm (Existence with a monotonicity condition)}
Assume  $(H_{SC})$, $(H_{\lambda_1})$, $(H_m)$ and $(H_{\infty})$. Then problem $(P)$ has at least one solution.
\end{thm}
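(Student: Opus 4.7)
The plan is to transform $(P)$ into $(Q)$ through the change of variable $v=A(u)$ described in the introduction, and solve $(Q)$ by applying a mountain pass theorem to the functional
$$J(v):=\frac{1}{p}\int_\Omega|\nabla v|^p\,dx-\int_\Omega H(x,v)\,dx,\qquad H(x,s):=\int_0^s h(x,t)\,dt,$$
on $W_0^{1,p}(\Omega)$. Since $(H_{AR})_1$--$(H_{AR})_2$ are not assumed, the usual Palais--Smale condition is out of reach; instead I will verify the Cerami condition via the monotonicity hypothesis $(H_m)$, following the scheme of Liu and Miyagaki--Souto used in \cite{ILU,Liu,Mi-So}.

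First, by Lemma \ref{l1} the hypothesis $(H_{SC})$ translates into a subcritical growth condition for $h(x,s)$, and by Lemma \ref{l3} the hypothesis $(H_{\lambda_1})$ yields $\limsup_{s\to 0^+}h(x,s)/s^{p-1}<\lambda_1$ uniformly in $x$. These two facts plus $(H_\infty)$ provide the standard mountain pass geometry: a ball around $0$ on which $J\ge\alpha>0$ (from the strong maximum principle characterization of $\lambda_1$ together with Sobolev embeddings), and a direction along which $J\to-\infty$. Indeed, $(H_\infty)$ together with $(H_m)$ implies that $H(x,s)/s^p\to+\infty$, so for any fixed positive $\varphi\in W_0^{1,p}(\Omega)$ one has $J(t\varphi)\to-\infty$ as $t\to+\infty$.

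The main obstacle is the Cerami condition. Given a sequence $(v_n)$ with $J(v_n)\to c$ and $(1+\|v_n\|)\|J'(v_n)\|\to 0$, I would first show $\|v_n\|$ is bounded. Define $\tilde H(x,s):=\tfrac{1}{p}h(x,s)s-H(x,s)$; the monotonicity $(H_m)$ gives $\tilde H(x,\cdot)$ nondecreasing for $s\ge s_0$, so $\tilde H(x,ts)\le\tilde H(x,s)+C$ for every $t\in[0,1]$. Suppose for contradiction that $\|v_n\|\to\infty$ and set $w_n:=v_n/\|v_n\|$, so (up to subsequence) $w_n\rightharpoonup w$ weakly and $w_n\to w$ a.e. On $\{w\neq 0\}$ one has $v_n(x)\to\infty$; dividing the identity $\|\nabla v_n\|_p^p=\int_\Omega h(x,v_n)v_n\,dx+o(1)$ by $\|v_n\|^p$ and applying Fatou together with $(H_\infty)$ yields
$$1=\int_\Omega\frac{h(x,v_n)}{v_n^{p-1}}w_n^p\,dx+o(1)\longrightarrow+\infty,$$
a contradiction, so $w\equiv 0$. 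Then choose $R>0$ large (to be fixed) and $t_n:=R/\|v_n\|\in(0,1)$ for $n$ large. The subcritical growth of $h$ gives $\int H(x,Rw_n)\to 0$, hence $J(t_nv_n)=J(Rw_n)\to R^p/p$, so that $J(t_nv_n)>J(v_n)$ for $n$ large. Letting $\bar t_n\in(0,1)$ maximize $t\mapsto J(tv_n)$ on $[0,1]$, one has $\langle J'(\bar t_nv_n),\bar t_nv_n\rangle=0$, whence
$$pJ(\bar t_nv_n)=p\int_\Omega\tilde H(x,\bar t_nv_n)\,dx\le p\int_\Omega\tilde H(x,v_n)\,dx+C=pJ(v_n)-\langle J'(v_n),v_n\rangle+C,$$
which is bounded. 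Choosing $R$ large enough yields the contradiction $R^p/p\le pc+C$. Therefore $(v_n)$ is bounded.

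Once boundedness is established, extract $v_n\rightharpoonup v$ in $W_0^{1,p}(\Omega)$ and $v_n\to v$ in $L^q(\Omega)$ for any $q<p^*$; the subcritical growth of $h$ together with the $(S_+)$ property of $-\Delta_p$ on $W_0^{1,p}(\Omega)$ then gives $v_n\to v$ strongly, so that $J'(v)=0$. The mountain pass theorem in its Cerami version yields a nontrivial critical point $v$ of $J$; using $h(x,s)=0$ for $s\le 0$ (or, equivalently, replacing $h$ by its positive truncation) ensures $v\ge 0$, and the strong maximum principle for the $p$-Laplacian combined with $\mathcal{C}^{1,\alpha}$ regularity yields $v\in\mathcal{C}^1(\overline\Omega)$ with $v>0$ in $\Omega$. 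Finally $u:=A^{-1}(v)$ is a solution of $(P)$ by the computations of Section 3.
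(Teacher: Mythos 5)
Your proposal is correct and follows essentially the same route as the paper: pass to the transformed problem $(Q)$ via $v=A(u)$, use Lemmas \ref{l1} and \ref{l3} to translate $(H_{SC})$ and $(H_{\lambda_1})$ into subcriticality and $p$-superlinearity at zero for $h$, and exploit the monotonicity of $s\mapsto h(x,s)/s^{p-1}$ to obtain the Cerami condition. The only difference is one of packaging: the paper simply checks that $h$ satisfies the hypotheses of Theorem 1.1 of Liu \cite{Liu} and cites that result, whereas you unfold Liu's argument in full (the function $\tilde H=\tfrac1p hs-H$, the normalized sequence $w_n=v_n/\|v_n\|$, the Fatou argument forcing $w\equiv 0$, and the scaling trick with the maximizer $\bar t_n$ of $t\mapsto J(tv_n)$ on $[0,1]$). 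Your self-contained version is accurate and buys independence from the reference, at the cost of reproving a known theorem; the paper's version is shorter but delegates the entire compactness analysis to \cite{Liu}.
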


Theorem \ref{t2} as well as its corollary below are proved in Section 5.

\begin{cor}
\label{ct2}
Assume $(H_f)'$, $(H_{\lambda_1})$, $(H_{SC})$, $(H_{\infty})$ and
 \begin{itemize}
\item [$(H_m)'$] $\displaystyle \lim_{s \to \infty} \frac{f'(x,s) \int_0^s e^{\frac{G(t)}{p-1}} dt}{f(x,s)\left( (p-1)e^{\frac{G(s)}{p-1}}-g(s)\int_0^s e^{\frac{G(t)}{p-1}} dt\right)} >1$
uniformly with respect to $x \in \Omega$. 
\end{itemize}
Then problem $(P)$ has at least one solution.
\end{cor}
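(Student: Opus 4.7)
The plan is to deduce Corollary \ref{ct2} from Theorem \ref{t2}. Three of the four hypotheses of Theorem \ref{t2}, namely $(H_{SC})$, $(H_{\lambda_1})$ and $(H_{\infty})$, are assumed outright; so everything reduces to showing that $(H_f)'$ together with $(H_m)'$ implies the monotonicity condition $(H_m)$.

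Set $A(s):=\int_0^s e^{G(t)/(p-1)}\,dt$ and
$$M(x,s):=\frac{e^{G(s)}f(x,s)}{A(s)^{p-1}}.$$
Under $(H_f)'$, $f(x,\cdot)$ is differentiable and bounded below by $\varepsilon>0$ on $[s_0,\infty)$, so $M(x,\cdot)$ is differentiable there. A direct calculation (using $A'(s)=e^{G(s)/(p-1)}$ and $G'(s)=g(s)$) gives
$$\partial_s M(x,s)=\frac{e^{G(s)}}{A(s)^{p}}\Bigl[f'(x,s)\,A(s)-f(x,s)\bigl((p-1)e^{G(s)/(p-1)}-g(s)A(s)\bigr)\Bigr].$$
Thus $(H_m)$ amounts to the bracketed quantity being nonnegative for $s$ large, uniformly in $x$.

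There are two cases to handle. If the auxiliary function $\varphi(s):=(p-1)e^{G(s)/(p-1)}-g(s)A(s)$ is $\le 0$ at some $s$, then the bracket is automatically nonnegative since $f,f'\ge 0$. If $\varphi(s)>0$, the bracket has the sign of
$$\frac{f'(x,s)\,A(s)}{f(x,s)\,\varphi(s)}-1,$$
which is precisely the quantity whose limit $(H_m)'$ controls. Since the limit is strictly greater than $1$ uniformly in $x$, there exists $s_1\ge s_0$, independent of $x$, such that this expression is nonnegative for $s\ge s_1$. Combining the two cases, $\partial_s M(x,s)\ge 0$ on $[s_1,\infty)$ for a.e.\ $x\in\Omega$, which is $(H_m)$.

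The only subtlety I expect is the bookkeeping around $\varphi$: $(H_m)'$ is formulated as a limit, so we must be sure that its statement is meaningful (i.e.\ no division by zero in the limiting sense), and must be cautious in the case where $\varphi$ changes sign infinitely often. The simple dichotomy above bypasses this issue: whenever $\varphi(s)\le 0$, the desired inequality is automatic, and whenever $\varphi(s)>0$, $(H_m)'$ provides the required lower bound for $s$ large. With $(H_m)$ established, a direct application of Theorem \ref{t2} yields a solution of $(P)$.
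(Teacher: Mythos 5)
Your proof follows essentially the same route as the paper's: reduce to Theorem \ref{t2}, compute the derivative of $\dfrac{e^{G(s)}f(x,s)}{\bigl(\int_0^s e^{\frac{G(t)}{p-1}}dt\bigr)^{p-1}}$ (your formula for $\partial_s M$ agrees with the paper's), and read off $(H_m)$ from $(H_m)'$; the paper then simply asserts that the resulting inequality ``is clearly satisfied if $(H_m)'$ holds.'' Your only divergence is the sign dichotomy on $\varphi(s)=(p-1)e^{G(s)/(p-1)}-g(s)\int_0^s e^{G(t)/(p-1)}dt$, which is an attempt to be more careful than the paper --- but the branch $\varphi(s)\le 0$ is not justified as written: $(H_f)'$ gives only $f\ge\varepsilon$ and the existence of $f'$, not $f'\ge 0$. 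Indeed, if $\varphi(s)<0$ at some large $s$, then $(H_m)'$ (ratio $>1$ with negative denominator) forces $f'(x,s)A(s)<f(x,s)\varphi(s)$, i.e.\ the bracket is \emph{negative} there, so your claimed automatic nonnegativity fails. This degenerate case is equally unaddressed in the paper, whose one-line justification tacitly assumes $\varphi>0$ for large $s$ (which is what makes $(H_m)'$ usable, and what holds in all the examples); so your argument matches the published one in substance, and the extra branch should either be dropped or replaced by the observation that $\varphi>0$ eventually is implicitly part of the intended reading of $(H_m)'$.
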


\begin{exm} \label{e5}
{\rm Let $g(s)=\frac{p-1}{1+s}$. Then Corollary \ref{ct2} applies, for instance, to  $f(x,s)=s^{p-1}(\log(s+1))^q$ with $q>0$. This corresponds to example (v) from the Introduction.}
\end{exm}

\begin{exm}\label{e6}
{\rm Let $g(s) \equiv C$, with $C>0$. Then $(H_m)'$ reduces to $$\lim_{s \to \infty} \frac{f'(x,s)}{f(x,s)} (e^{\frac{Cs}{p-1}}-1)>C.$$
Thus Corollary \ref{ct2} applies, for instance, to $f(x,s)=s^{r-1}$ or $f(x,s)=(\log(s+1))^{r-1}$ with $p<r$ (no restriction from above is needed on $r$ in view of Proposition \ref{peq}). This corresponds respectively to examples (vi) and (vii) from the Introduction.  Corollary \ref{ct2} also applies to $f(x,s)=\lambda s^{p-1}$ or $f(x,s)=\lambda (\log(s+1))^{p-1}$ with $0<\lambda<\lambda_1$.}
\end{exm}

\begin{exm} \label{e7}
{\rm Let $g(s)=(p-1)\frac{s+2}{s+1}$. Then $(H_m)'$ reduces to $$\lim_{s \to \infty} \frac{f'(x,s)}{f(x,s)} s(s+1)>p-1.$$
Thus Corollary \ref{ct2} applies, for instance, to $f(x,s)=s^{r-1}$ or $f(x,s)=(\log(s+1))^{r-1}$ with $p<r$ (no restriction from above is needed on $r$ in view of Proposition \ref{peq}). It also applies to $f(x,s)=\lambda s^{p-1}$ or $f(x,s)=\lambda (\log(s+1))^{p-1}$ with $0<\lambda<\lambda_1$.}
\end{exm}

\begin{exm} \label{e8}
{\rm Let $g(s)=s^q$ with $q>0$. Then $(p-1)e^{\frac{G(s)}{p-1}}-g(s)\int_0^s e^{\frac{G(t)}{p-1}} dt$ is decreasing on $[0,\infty)$ and consequently $(H_m)'$ is implied by the condition 
$$ \lim_{s \to \infty} \frac{f'(x,s) \int_0^s e^{\frac{G(t)}{p-1}} dt}{f(x,s)} >1.$$
Thus Corollary \ref{ct2} applies, for instance, to $f(x,s)=s^{r-1}$ or $f(x,s)=(\log(s+1))^{r-1}$ with $p<r$ (no restriction from above is needed on $r$ in view of Proposition \ref{peq4}).}
\end{exm}

\medskip
\subsection{\textbf{The case $f(x,s)$ $p$-sublinear at zero}}
\strut
\medskip

We consider now the parametrized problem $(P_\lambda)$ and still assume $(H_g)$ and $(H_f)$.
Our first result involves the following assumptions on $f$:\\

\begin{itemize}
\item[$(H_1)$] There exists a non-empty smooth domain $\Omega_1 \subset \Omega$ such that
$$\lim_{s \to 0} \frac{f(x,s)}{s^{p-1}}=+\infty$$
uniformly with respect to $x \in \Omega_1$.\\

\item[$(H_2)$] There exist a non-empty smooth domain $\Omega_2 \subset \Omega$ and $n \in L^{\infty}(\Omega_2)$ with $n \geq 0$, $n \not \equiv 0$ such that
$$f(x,s)\geq n(x)s^{p-1}$$
for {\it a.e.} $x \in \Omega_2$ and all $s \geq 0$.\\
\end{itemize}

$(H_1)$ is a (local) $p$-sublinearity condition at zero and $(H_2)$ is related to the trivial sufficient condition of non-existence for $-\Delta u =l(u)$ in $\Omega$, $u>0$ in $\Omega$, and $u=0$ on $\partial \Omega$, namely, $\inf \{ \frac{l(s)}{s}: s>0\} >\lambda_1(\Omega)$, where $\lambda_1(\Omega)$ denotes the first eigenvalue of $-\Delta$ on $H_0^1(\Omega)$.

\begin{thm}[Existence of one solution]
\label{t3}
\strut
\begin{enumerate}
\item If $(H_1)$ holds then there exists $0<\Lambda \leq \infty$ such that $(P_\lambda)$ has at least one solution for $0<\lambda<\Lambda$ and no solution for $\lambda>\Lambda$.
\item If $(H_1)$ and $(H_2)$ hold then $\Lambda<\infty$.
\item If $(H_1)$, $(H_2)$, $(H_{SC})$ and $(H_{AR})_1$ hold then $(P_\lambda)$ has at least one solution for $\lambda=\Lambda$. 
\end{enumerate}
\end{thm}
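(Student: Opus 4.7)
My plan is to transform $(P_\lambda)$ into $(Q_\lambda)$ by $v=A(u)$ and then invoke the sub--supersolution and local-minimizer machinery of \cite{DGU3} on the transformed problem. First I verify that the hypotheses pass to $h$: $(H_1)$ becomes the ``concave'' condition $h(x,v)/v^{p-1}\to\infty$ as $v\to 0$ on $\Omega_1$ (using $A'(0)=1$); $(H_2)$ gives the pointwise bound $h(x,v)\ge n(x)v^{p-1}$ on $\Omega_2$, because $A(s)\le s\,e^{G(s)/(p-1)}$ (as $A'$ is increasing), equivalently $(A^{-1}(v))^{p-1}e^{G(A^{-1}(v))}\ge v^{p-1}$; and the subcritical growth $(H_{SC})$ and the Ambrosetti--Rabinowitz condition $(H_{AR})_1$ translate into the corresponding properties of $h$ by Lemmas \ref{l1} and \ref{l2}. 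Set $\Lambda:=\sup\{\lambda>0:(Q_\lambda)\text{ has a solution}\}$.

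For part (1), I would prove $\Lambda>0$ via sub--supersolution: $\underline v=\varepsilon\phi_1$ is a subsolution for small $\varepsilon$ by the concave behavior of $h$ on $\Omega_1$, while a supersolution is obtained by truncating $h$ above a level $M$ (using the bound in $(H_f)$ on compact intervals, so the truncated $h$ is bounded), solving the truncated problem via $M\,W$ where $W$ is the $p$-torsion function, and checking the solution stays below $M$ when $\lambda$ is small. The interval structure is routine: if $v_0$ solves $(Q_{\lambda_0})$, then $v_0$ is a supersolution of $(Q_\lambda)$ for each $\lambda\in(0,\lambda_0)$, and paired with $\varepsilon\phi_1$ it yields a solution of $(Q_\lambda)$.

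For part (2), I would use $h(x,v)\ge n(x)v^{p-1}$ on $\Omega_2$. Any solution $v$ of $(Q_\lambda)$ is strictly positive on $\overline{\Omega_2}$ and satisfies $-\Delta_p v\ge\lambda n(x)v^{p-1}$ there. Testing with $\varphi^p/(v+\delta)^{p-1}$, where $\varphi$ is the positive first eigenfunction of $-\Delta_p$ in $\Omega_2$ with weight $n$, and letting $\delta\to 0^+$ via Picone's identity, I obtain $\lambda\le\mu_1(\Omega_2,n)$, whence $\Lambda<\infty$.

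For part (3), which is the main obstacle, take $\lambda_n\uparrow\Lambda$ with solutions $v_n$ of $(Q_{\lambda_n})$; the goal is a uniform bound in $W_0^{1,p}(\Omega)$. Constructing $v_n$ as the minimal solution of the ordered sub--supersolution interval endows it with the local-minimizer property in $C^1_0(\overline\Omega)$ (cf.\ \cite{DGU3}), so $E_{\lambda_n}(v_n)$ is bounded above (by the energy at a fixed supersolution for $\lambda$ close to $\Lambda$). Combining the Euler equation tested against $v_n$ with $(H_{AR})_1$ gives
\[
\Bigl(1-\tfrac{p}{\theta}\Bigr)\|\nabla v_n\|_p^{\,p}\le p\,E_{\lambda_n}(v_n)+O(1),
\]
so $\{v_n\}$ is bounded in $W_0^{1,p}(\Omega)$. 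Sobolev compactness supplied by $(H_{SC})$ then yields a subsequence converging weakly in $W_0^{1,p}$ and strongly in $L^r$ (for the exponent $r$ of $(H_{SC})$) to some $v_\Lambda$; passing to the limit in the weak formulation shows $v_\Lambda$ solves $(Q_\Lambda)$, and the uniform lower bound $v_n\ge\varepsilon\phi_1$ on $\Omega_1$ (from the subsolution of part (1)) guarantees $v_\Lambda\ne 0$. Setting $u_\Lambda=A^{-1}(v_\Lambda)$ recovers a solution of $(P_\Lambda)$. The delicate point is precisely this a priori bound at the extremal parameter: it requires balancing $(H_{AR})_1$ (to trade $\int h(x,v_n)v_n$ against $\int H(x,v_n)$), $(H_{SC})$ (to absorb the lower order $L^r$ remainder), and the variational characterization of the minimal solutions (to keep $E_{\lambda_n}(v_n)$ bounded above) so as to rule out blow-up while producing a nontrivial limit.
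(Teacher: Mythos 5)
Your overall route is the paper's: pass to $(Q_\lambda)$ and feed it to the sub--supersolution/local-minimizer results of \cite{DGU3}. Your translations of the hypotheses are correct, and your direct verification that $h(x,s)\ge n(x)s^{p-1}$ on $\Omega_2$ (via $tA'(t)\ge A(t)$, since $A'$ is increasing) is in fact cleaner than the paper's, which only extracts a positive infimum $\alpha$ of $\bigl(tA'(t)/A(t)\bigr)^{p-1}$ by L'Hospital. The paper stops there: it checks the hypotheses $(H_e)$, $(H_{\Omega_1})$, $(H_{\tilde\Omega})$, $(SC)$, $(AR)_1$ of \cite{DGU3} and cites Theorems 2.1--2.3 of that reference as black boxes, whereas you go on to re-prove those theorems. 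Your supersolution $Me$ for small $\lambda$, your Picone bound for part (2), and your energy/AR a priori bound for the minimal solutions at $\lambda_n\uparrow\Lambda$ in part (3) are all essentially the arguments of \cite{DGU3} and are sound.

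There is, however, one genuine error in your reconstruction: the claim that $\underline v=\varepsilon\phi_1$ is a subsolution of $(Q_\lambda)$ ``by the concave behavior of $h$ on $\Omega_1$.'' Hypothesis $(H_1)$ is \emph{local}: it controls $f(x,s)/s^{p-1}$ near $s=0$ only for $x\in\Omega_1$, and outside $\Omega_1$ the function $h(x,\cdot)$ may vanish identically near $0$ (only $h\ge 0$ is guaranteed). Then $-\Delta_p(\varepsilon\phi_1)=\lambda_1\varepsilon^{p-1}\phi_1^{p-1}>0=\lambda h(x,\varepsilon\phi_1)$ on $\Omega\setminus\Omega_1$, so $\varepsilon\phi_1$ is not a subsolution on $\Omega$. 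This is precisely the difficulty the ``local sublinearity'' framework of \cite{DGU3} is designed to handle: nontriviality of the solution is obtained not from a global subsolution but by minimizing the energy over the order interval $[0,\overline v]$ and showing the infimum is negative, testing with small multiples of a first eigenfunction of $\Omega_1$ extended by zero. The same flaw propagates to your part (3), where you invoke the lower bound $v_n\ge\varepsilon\phi_1$ to conclude $v_\Lambda\neq 0$; a correct substitute is the monotonicity of the minimal solutions in $\lambda$ (so $v_n\ge v_{\lambda_1}>0$), or again the negative-energy argument. With these repairs (or simply by quoting Theorems 2.1--2.3 of \cite{DGU3} as the paper does), your proof is complete.
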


Our purpose now is to derive a multiplicity result for $(P_\lambda)$ when $0<\lambda<\Lambda$. More assumptions on $f$ will be needed:\\

\begin{itemize}
\item[$(H_3)$] For any $s_0>0$ there exists $B=B_{f,s_0} \geq 0$ such that
for {\it a.e.} $x \in \Omega$ the function
$$s \mapsto f(x,s)+Bs^{p-1}$$
is nondecreasing on $[0,s_0]$.\\

\item[$(H_4)$] For any $u \in C_0^1(\overline{\Omega})$ with $u>0$ in $\Omega$, the function $f(x,u(x))$ is positive in $\Omega$, in the sense that for any compact $K \subset \Omega$ there exists $\epsilon>0$ such that $f(x,u(x))\geq \varepsilon$ for {\it a.e.} $x \in K$.
\end{itemize}

These two assumptions are related to the use of the strong comparison principle for the $p$-Laplacian (cf. Proposition 3.4 in \cite{DGU3}). $(H_3)$ is clearly satisfied if $f(x,s)$ is nondecreasing with respect to $s$. $(H_4)$ is satisfied, for instance, if $f$ is continuous  and positive in $\Omega \times [0,\infty)$.

\begin{thm}[Existence of two solutions]
\label{t4}
Assume $(H_1)$, $(H_{SC})$, $(H_{AR})_1$, $(H_{AR})_2$, $(H_3)$, and $(H_4)$. Then problem $(P_\lambda)$ has at least two solutions $u,v$ for $0<\lambda<\Lambda$, with $u \leq v$, $u \not \equiv v$.
\end{thm}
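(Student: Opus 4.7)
The plan is to reduce $(P_\lambda)$ to the transformed problem $(Q_\lambda)$, $-\Delta_p v=\lambda h(x,v)$, through the change of variable $v=A(u)$, and then to produce two ordered distinct solutions $v_1\leq v_2$, $v_1\not\equiv v_2$ of $(Q_\lambda)$ by the Ambrosetti--Brezis--Cerami scheme along the lines of \cite{DGU3}. Since $A:[0,\infty)\to[0,\infty)$ is a $C^1$-diffeomorphism, such a pair yields two ordered distinct solutions of $(P_\lambda)$.

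I would first verify, by computations analogous to those used for Lemmas \ref{l1}--\ref{l3} together with the explicit formulas $A'(s)=e^{G(s)/(p-1)}$ and $h(x,s)=e^{G(A^{-1}(s))}f(x,A^{-1}(s))$, that the assumptions on $(f,g)$ translate into the following properties of $h$: subcritical growth at infinity (from $(H_{SC})$); the classical Ambrosetti--Rabinowitz condition (from $(H_{AR})_1$ and $(H_{AR})_2$); local $p$-sublinearity at zero on $\Omega_1$ (from $(H_1)$); for each $s_1>0$, monotonicity of $s\mapsto h(x,s)+\tilde B s^{p-1}$ on $[0,s_1]$ for some $\tilde B\geq 0$ (from $(H_3)$); and strict positivity of $h(x,v(\cdot))$ on compact subsets of $\Omega$ whenever $v\in C^1_0(\overline\Omega)$ is positive in $\Omega$ (from $(H_4)$). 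With these properties, Theorem \ref{t3}(1) provides the threshold $\Lambda$ and at least one solution for $\lambda\in(0,\Lambda)$.

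Fix $\lambda\in(0,\Lambda)$ and choose $\lambda'\in(\lambda,\Lambda)$: any solution $v^*$ of $(Q_{\lambda'})$ is then a strict super-solution of $(Q_\lambda)$, while $(H_1)$ makes $\underline v=\varepsilon\varphi_1$ (with $\varphi_1$ the first eigenfunction of $-\Delta_p$ on $\Omega_1$, extended by zero, and $\varepsilon$ small) a strict sub-solution. Minimizing on the order interval $[\underline v,v^*]$ the functional associated with $h$ truncated to that interval yields a solution $v_1$ with $\underline v\leq v_1\leq v^*$. The strong comparison principle of \cite{DGU3} (Proposition 3.4), whose applicability is guaranteed precisely by $(H_3)$ and $(H_4)$, upgrades the inequalities to $\underline v<v_1<v^*$ in the $C^1_0(\overline\Omega)$-sense. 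Hence $v_1$ is a local minimizer, in $C^1_0(\overline\Omega)$, of the unrestricted functional
$$I_\lambda(v)=\frac{1}{p}\int_\Omega|\nabla v|^p\,dx-\lambda\int_\Omega H(x,v)\,dx,\qquad H(x,s):=\int_0^s h(x,t)\,dt,$$
and the Garc\'ia Azorero--Manfredi--Peral equivalence between $C^1$- and $W_0^{1,p}$-local minimizers for the $p$-Laplacian promotes $v_1$ to a local minimizer of $I_\lambda$ in $W_0^{1,p}(\Omega)$.

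For the second solution I would apply the Mountain Pass Theorem to $I_\lambda$ with base point $v_1$. The conditions $(H_{AR})_1$--$(H_{AR})_2$ ensure that $I_\lambda(v_1+te)\to-\infty$ along a suitable nonnegative test function $e$ supported in $\Omega_1$, giving the Mountain Pass geometry, while subcriticality plus AR yields the Palais--Smale condition. The resulting critical point $v_2$ is distinct from $v_1$; truncating $h(x,s)$ below $s=v_1(x)$ and combining $(H_3)$ with the strong maximum principle forces $v_2\geq v_1$ in $\Omega$. The main obstacle is the previous paragraph: lifting the minimizer obtained on the order interval to a $W_0^{1,p}$-local minimizer of the full functional. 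This is exactly the content of the strong comparison argument that motivates the technical assumptions $(H_3)$ and $(H_4)$, and without them the second critical point produced by the Mountain Pass could coincide with the first solution or fail to be ordered above it.
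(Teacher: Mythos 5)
Your proposal follows essentially the same route as the paper: pass to $(Q_\lambda)$ via $v=A(u)$ and run the Ambrosetti--Brezis--Cerami two-solution machinery of \cite{DGU3}; the paper simply cites Theorem 2.4 of \cite{DGU3} at the point where you unpack its proof (order-interval minimizer, strong comparison, $C^1$ versus $W_0^{1,p}$ local minimizers, mountain pass). The one place where your argument is thinner than it should be is the claim that $(H_3)$ transfers to the monotonicity of $s\mapsto h(x,s)+\tilde B s^{p-1}$ on $[0,s_1]$ ``by computations analogous to those used for Lemmas \ref{l1}--\ref{l3}'': those lemmas are asymptotic substitutions and do not address this point. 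Writing $t=A^{-1}(s)$, what must be shown is that $t\mapsto e^{G(t)}f(x,t)+B\,A(t)^{p-1}$ is nondecreasing on $[0,A^{-1}(s_1)]$, and the monotonicity of $f(x,t)+B_1t^{p-1}$ supplied by $(H_3)$ does not give this directly because of the extra factors $e^{G(t)}$ and $A(t)^{p-1}$. The paper's device is to decompose the expression as $e^{G(t)}\bigl(f(x,t)+B_1t^{p-1}\bigr)+a(t)$ with $a(t)=-B_1t^{p-1}e^{G(t)}+B\,A(t)^{p-1}$ and to verify, using $A(t)/t\to 1$ as $t\to 0$, that $a'(t)>0$ on the relevant interval once $B$ is taken large enough; this is the only genuinely computational step of the proof and should be carried out rather than asserted. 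Everything else in your outline (the translation of $(H_{SC})$, $(H_{AR})_1$, $(H_{AR})_2$, $(H_1)$, $(H_4)$ into the corresponding properties of $h$, and the two-solution scheme itself) matches the content of the cited theorem and is correct.
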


The examples (viii) and (ix) from the introduction illustrate Theorem \ref{t4}. Theorems \ref{t3} and \ref{t4} are proved in Section 5.

\medskip
\section{{\bf Preliminaries}}
\medskip

We first discuss the change of variable which will transform problem $(P)$ into problem $(Q)$.
 
 It was observed in \cite{KK} that, in the case $p=2$ and $g \equiv 1$, the change of variables $v=e^u-1$ transforms the quasilinear problem $(P)$ into the semilinear one
$$
\left\{
\begin{array}{ll}
-\Delta v = (1+v) f(x,\log(1+v)) \
& \mbox{in} \ \ \Omega, \ \ \\
v>0 \
&\mbox{in} \ \ \Omega, \ \ \\
 v = 0 \ &\mbox{on} \ \
\partial\Omega.
\end{array}
 \right.
$$
This can be extended to the general case of $(P)$ in the following way. Consider any change of variable
$$v=A(u),$$
where $A:[0,\infty) \rightarrow [0,\infty)$ is a $\mathcal{C}^2$ diffeomorphism with $A(0)=0$, $A(\infty)=\infty$ and $A'>0$. Clearly $u \in C^1(\overline{\Omega})$ with $u=0$ on $\partial \Omega$ if and only if $v \in C^1(\overline{\Omega})$ with $v=0$ on $\partial \Omega$. A simple computation yields
$$\Delta_p v=(p-1)A'(u)^{p-2} A''(u) |\nabla u|^p + A'(u)^{p-1} \Delta_p u$$
in the distributional sense.
It follows that $u$ solves $(P)$ if and only if $v$ satisfies
$$\Delta_p v=\left[(p-1)A'(u)^{p-2} A''(u) -g(u)A'(u)^{p-1}\right] |\nabla u|^p - A'(u)^{p-1} f(x,u).$$
The gradient term will disappear in the above expression if $A$ satisfies 
$$(p-1)A''(u)=g(u)A'(u),$$
which will be the case if one takes
\begin{equation}
\label{defa}
A(s):=\int_0^s e^{\frac{G(t)}{p-1}} \ dt.
\end{equation}
With this choice for $A$, problem $(P)$ for $u$ is equivalent to problem $(Q)$ for $v$:
$$
\left\{
\begin{array}{ll}
-\Delta_p v = h(x,v) \
& \mbox{in} \ \ \Omega, \ \ \\
v>0 \
&\mbox{in} \ \ \Omega, \ \ \\
 v = 0 \ &\mbox{on} \ \
\partial\Omega,
\end{array}
 \right.\leqno{(Q)}
$$
where 
\begin{equation}
\label{defh}
h(x,s):=e^{G(A^{-1}(s))} f(x,A^{-1}(s))
\end{equation}
Note that formulae \eqref{defa} and \eqref{defh} were already derived in \cite{HBV,MoMo}. 

 We now investigate how our assumptions on $f$ and $g$ transform into assumptions on $h$.

 Hypothesis $(H_g)$ and $(H_f)$ clearly imply in particular that $h:\Omega \times [0,\infty) \rightarrow [0,\infty)$ is a Carath\'eodory function with $h(x,s)$ remaining bounded when $s$ remains bounded. 

We recall that in the context of a problem like $(Q)$, the function $h$ is said to have subcritical growth if\\
\begin{itemize}
\item [$(SC)$] There exists $r<p^*$ such that $$\lim_{s \to \infty} \frac{h(x,s)}{s^{r-1}}=0$$ uniformly with respect to  $x \in \Omega$.\\
\end{itemize}
It is said to satisfy the Ambrosetti-Rabinowitz condition if, denoting $H(x,s):=\int_0^s h(x,t) dt$, \\
\begin{itemize}
\item [$(AR)_1$] There exist $\theta >p$ and $s_0 \geq 0$ such that $$\theta H(x,s) \leq sh(x,s)$$ for  {\it a.e.} $x \in \Omega$ and all $s \geq s_0$.\\
\item  [$(AR)_2$] There exist a non-empty smooth subdomain $\Omega_1 \subset \Omega$, $\delta>0$ and $s_0 \geq 0$ such that
$$H(x,s)\geq \delta$$ for  {\it a.e.} $x \in \Omega$ and all $s \geq s_0$.

\end{itemize}

\begin{rem}{\rm
The most usual version of the Ambrosetti-Rabinowitz condition \cite{Am-Ra} in the present $p$-Laplacian context deals with a continuous function $h(x,s)$ on $\overline{\Omega} \times [0,\infty)$ and requires the existence of $\theta>p$ and $s_0 \geq 0$ such that
\begin{equation}
\label{aru}
0<\theta H(x,s) \leq sh(x,s)
\end{equation}
for all $x \in \overline{\Omega}$ and all $s \geq s_0$. Condition \eqref{aru} clearly implies $(AR)_1$ and $(AR)_2$. Some care must however be taken when dealing with a continuous (and a fortiori a Carath\'eodory) function on $\Omega \times [0,\infty)$, as was observed recently in \cite{Mu}. This is the reason for the present formulation of $(AR)_2$. Note also that $(AR)_2$ can be seen as a localized version of the first inequality in \eqref{aru}.}
\end{rem}

\begin{lem}\label{l1}
The function $h$ from \eqref{defh} satisfies $(SC)$ if and only if the functions $f$ and $g$ satisfy $(H_{SC})$.
\end{lem}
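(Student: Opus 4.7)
The proof plan is essentially a direct substitution argument, exploiting the fact that the change of variable $v = A(u)$ was designed precisely to turn the expression defining $(H_{SC})$ into the standard subcritical quotient $h(x,s)/s^{r-1}$.

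First, I would record the basic properties of $A$ that make the substitution legitimate. By $(H_g)$ the function $g$ is nonnegative, so $G \geq 0$ and the integrand $e^{G(t)/(p-1)} \geq 1$ in the definition \eqref{defa} of $A$. Hence $A:[0,\infty) \to [0,\infty)$ is a strictly increasing $\mathcal{C}^2$ diffeomorphism with $A(0)=0$ and $A(s) \geq s$, so in particular $A(\infty) = \infty$. Consequently $A^{-1}:[0,\infty) \to [0,\infty)$ is also a bijection, and $s \to \infty$ if and only if $t:=A^{-1}(s) \to \infty$. The map $A$ has no $x$-dependence, so this equivalence of limits is uniform in $x \in \Omega$.

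Next, I would perform the substitution $t = A^{-1}(s)$ in the quotient appearing in $(SC)$. Using the definition \eqref{defh} of $h$, we have
\begin{equation*}
\frac{h(x,s)}{s^{r-1}} \;=\; \frac{e^{G(A^{-1}(s))}\, f(x, A^{-1}(s))}{A(A^{-1}(s))^{r-1}} \;=\; \frac{e^{G(t)}\, f(x,t)}{\left(\int_0^{t} e^{G(\tau)/(p-1)}\, d\tau\right)^{r-1}},
\end{equation*}
which is exactly the quotient occurring in $(H_{SC})$ (with $s$ there replaced by $t$). Since $A^{-1}$ does not depend on $x$, uniform convergence in $x$ is preserved by the substitution.

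Combining the two steps, for any fixed $r < p^*$ the statement
\begin{equation*}
\lim_{s \to \infty} \frac{h(x,s)}{s^{r-1}} = 0 \quad \text{uniformly in } x \in \Omega
\end{equation*}
is equivalent to
\begin{equation*}
\lim_{t \to \infty} \frac{e^{G(t)}\, f(x,t)}{\left(\int_0^{t} e^{G(\tau)/(p-1)}\, d\tau\right)^{r-1}} = 0 \quad \text{uniformly in } x \in \Omega.
\end{equation*}
Taking the same $r$ on both sides therefore shows $(SC) \iff (H_{SC})$. There is no substantive obstacle here; the only point requiring a brief verification is $A(\infty)=\infty$, which is immediate from $g \geq 0$, and this ensures that the variable $t = A^{-1}(s)$ really sweeps out $[0,\infty)$ as $s$ does.
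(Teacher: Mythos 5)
Your proof is correct and follows exactly the paper's argument: the paper's own (one-line) proof consists of writing $A^{-1}(s)=t$ in \eqref{defh}, substituting into $(SC)$, and using \eqref{defa} to identify $s^{r-1}=A(t)^{r-1}$ with the denominator in $(H_{SC})$. You merely spell out the details (monotonicity of $A$, $A(\infty)=\infty$, preservation of uniformity in $x$) that the paper leaves implicit.
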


\begin{proof}
Writing $A^{-1}(s)=t$ in \eqref{defh}, replacing  in $(SC)$, and using \eqref{defa}, the equivalence follows immediately.
\end{proof}

\begin{lem}\label{l2}
The function $h$ from \eqref{defh} satisfies $(AR)_1$ and $(AR)_2$ if and only if the functions $f$ and $g$ satisfy $(H_{AR})_1$ and $(H_{AR})_2$.
\end{lem}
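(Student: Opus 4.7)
The proof will be a direct substitution argument, rewriting $(AR)_1$ and $(AR)_2$ for $h$ in terms of $f$ and $g$ through the change of variable $t=A(\tau)$.

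The plan is first to compute $H(x,s)=\int_0^s h(x,t)\,dt$ in terms of $f$ and $g$. Using \eqref{defh} and substituting $t=A(\tau)$, so that $dt=A'(\tau)\,d\tau=e^{G(\tau)/(p-1)}\,d\tau$, I would obtain
\begin{equation*}
H(x,s)=\int_0^{A^{-1}(s)} e^{G(\tau)}f(x,\tau)\,e^{G(\tau)/(p-1)}\,d\tau
     =\int_0^{A^{-1}(s)} e^{\frac{p}{p-1}G(\tau)}f(x,\tau)\,d\tau.
\end{equation*}
Similarly, setting $\sigma:=A^{-1}(s)$ and using $s=A(\sigma)=\int_0^\sigma e^{G(t)/(p-1)}\,dt$ together with \eqref{defh}, the product $sh(x,s)$ becomes
\begin{equation*}
sh(x,s)=e^{G(\sigma)}f(x,\sigma)\int_0^\sigma e^{G(t)/(p-1)}\,dt.
\end{equation*}

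Next, I would plug these two identities into $(AR)_1$ and $(AR)_2$. After the change of variable $\sigma=A^{-1}(s)$, the inequality $\theta H(x,s)\le sh(x,s)$ turns literally into the inequality in $(H_{AR})_1$ at the point $\sigma$, while $H(x,s)\ge\delta$ turns into the inequality in $(H_{AR})_2$ at $\sigma$. Since the map $A:[0,\infty)\to[0,\infty)$ is a homeomorphism (strictly increasing, $A(0)=0$, $A(\infty)=\infty$, as is clear from \eqref{defa} and $(H_g)$), the quantifier ``for all $s\ge s_0$'' is equivalent to ``for all $\sigma\ge A^{-1}(s_0)$'', so up to replacing the threshold $s_0$ by $A^{-1}(s_0)$ (or vice versa) the two pairs of conditions coincide. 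Similarly, the almost-everywhere statements in $x$ pass through directly since the change of variable affects only the second argument, and the subdomain $\Omega_1$ in $(AR)_2$ and $(H_{AR})_2$ is the same in both formulations.

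The reasoning is essentially bookkeeping; the only point requiring a small comment is that the threshold constants $s_0$ in the two formulations are different (one is the image of the other under $A$ or $A^{-1}$), but both are finite, so the equivalence of the ``for $s$ large'' conditions is preserved. I would thus structure the proof as a single computation chain in two directions, observing that each step is an equivalence, not merely an implication. No truly hard step is expected; the main care is simply to check that the substitution $t=A(\tau)$ in the integral defining $H$ produces the exponent $p/(p-1)$ appearing in $(H_{AR})_1$ and $(H_{AR})_2$, which is where the $e^{G(\tau)}$ factor from $h$ combines with the Jacobian $e^{G(\tau)/(p-1)}$.
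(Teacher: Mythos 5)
Your proposal is correct and follows exactly the paper's own (very terse) argument: the substitution $t=A(\tau)$ in $H(x,s)$ and the identification $s=A(\sigma)$ in $sh(x,s)$ turn $(AR)_1$, $(AR)_2$ literally into $(H_{AR})_1$, $(H_{AR})_2$ with the threshold shifted by $A^{\pm1}$. Your computations of $H(x,s)$ and $sh(x,s)$ are right, and the remark that every step is an equivalence (so both directions follow at once) is precisely what the paper's "the equivalence follows immediately" is relying on.
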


\begin{proof}
Writing $A^{-1}(s)=t$ in \eqref{defh}, replacing  in $(AR)_1$ and $(AR)_2$, and using \eqref{defa}, the equivalence follows immediately.
\end{proof}

\begin{lem}\label{l3}
The function $h$ from \eqref{defh} satisfies 
$$\limsup_{s \to 0} \frac{h(x,s)}{s^{p-1}} =\limsup_{t \to 0} \frac{f(x,t)}{t^{p-1}}.$$
The same conclusion holds with $\limsup$ replaced on both sides by either $\liminf$ or $\lim$.
\end{lem}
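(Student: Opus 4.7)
The plan is to perform the natural substitution $t = A^{-1}(s)$ in the definition of $h$ and reduce the quotient $h(x,s)/s^{p-1}$ to a product of $f(x,t)/t^{p-1}$ and an explicit prefactor that converges to $1$ as $t \to 0$.

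First I would observe that since $A:[0,\infty) \to [0,\infty)$ is a $\mathcal{C}^2$ diffeomorphism with $A(0)=0$, one has $s \to 0^+$ if and only if $t := A^{-1}(s) \to 0^+$. Setting $s = A(t)$ in \eqref{defh} gives
$$\frac{h(x,s)}{s^{p-1}} \;=\; \frac{e^{G(t)}\,f(x,t)}{A(t)^{p-1}} \;=\; \frac{e^{G(t)}}{\bigl(A(t)/t\bigr)^{p-1}} \cdot \frac{f(x,t)}{t^{p-1}}.$$

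Next I would verify that the prefactor tends to $1$. From $G(0)=\int_0^0 g(\tau)\,d\tau=0$ we get $e^{G(t)}\to 1$. Since $A'(0)=e^{G(0)/(p-1)}=1$ and $A(0)=0$, the very definition of the derivative yields $A(t)/t \to 1$ as $t \to 0^+$. Hence
$$\frac{e^{G(t)}}{\bigl(A(t)/t\bigr)^{p-1}} \longrightarrow 1 \quad \text{as } t \to 0^+.$$

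Finally I would conclude: multiplying a nonnegative quantity by a positive factor tending to $1$ preserves limsup, liminf and limit. The equality of the three upper limits then follows, and the identical argument (since the prefactor's convergence is independent of which of the three notions one takes on the other factor) yields the analogous statements for liminf and lim. There is no real obstacle; the only point worth a remark is that $t\to 0$ and $s\to 0$ are interchangeable, which is immediate from $A$ being a continuous diffeomorphism with $A(0)=0$.
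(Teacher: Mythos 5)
Your proof is correct and follows essentially the same route as the paper: both substitute $t=A^{-1}(s)$ and reduce the claim to showing that the prefactor $\bigl(tA'(t)/A(t)\bigr)^{p-1}=e^{G(t)}\bigl(t/A(t)\bigr)^{p-1}$ tends to $1$ as $t\to 0$. The only (cosmetic) difference is that you evaluate this limit via $e^{G(t)}\to e^{G(0)}=1$ and the difference quotient $A(t)/t\to A'(0)=1$, while the paper uses L'Hospital's rule for the same quantity.
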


\begin{proof}
By L'Hospital rule, we have
\begin{eqnarray*}
\lim_{t \to 0} \frac{t A'(t)}{A(t)}&=&\lim_{t \to 0} \frac{t e^{\frac{G(t)}{p-1}}}{\int_0^t e^{\frac{G(\tau)}{p-1}} \ d\tau}\\&=&\lim_{t \to 0} \frac{e^{\frac{G(t)}{p-1}}+(p-1)^{-1}t g(t)e^{\frac{G(t)}{p-1}}}{e^{\frac{G(t)}{p-1}}}\\&=&\lim_{t \to 0} (1+ (p-1)^{-1} tg(t))\\&=&1.
\end{eqnarray*}
Writing $A^{-1}(s)=t$ in \eqref{defh}, it follows that 
\begin{eqnarray*}
\limsup_{s \to 0} \frac{h(x,s)}{s^{p-1}}&=& \limsup_{t \to 0} \left(\frac{A'(t)}{A(t)}\right)^{p-1} f(x,t)\\&=&\limsup_{t \to 0}  \left(\frac{tA'(t)}{A(t)}\right)^{p-1} \frac{f(x,t)}{t^{p-1}}\\&=&  \limsup_{t \to 0}   \frac{f(x,t )}{t^{p-1}}.
\end{eqnarray*}
The same argument applies with $\liminf$ or $\lim$.
\end{proof}

\medskip
 \section{\textbf{Subcritical growth and the Ambrosetti-Rabinowitz condition} }
\medskip 

In this section we provide sufficient conditions to have $(H_{SC})$, $(H_{AR})_1$, and $(H_{AR})_2$. First we show that under the further assumption $(H_f)'$, the conditions $(H_{AR})_1$ and $(H_{AR})_2$ are implied by the following condition $(H_{AR})'$, which is much easier to verify in most cases.

\begin{prop}\label{thlim}
Assume $(H_f)'$. If 
$$
\lim_{s \to \infty} \frac{\int_0^s e^{\frac{G(t)}{p-1}} dt}{e^{\frac{G(s)}{p-1}}} \left( g(s) + \frac{f'(x,s)}{f(x,s)}\right) >p-1  \leqno{(H_{AR})'}
$$
uniformly with respect to $x \in \Omega$, then $(H_{AR})_1$ and $(H_{AR})_2$ hold with $\Omega_1=\Omega$.
\end{prop}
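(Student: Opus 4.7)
The plan is to verify $(H_{AR})_1$ and $(H_{AR})_2$ by a direct L'H\^opital-type argument applied to the two sides of $(H_{AR})_1$. For fixed $x \in \Omega$, set
$$ N(t) := \Big(\int_0^t e^{G(\tau)/(p-1)}\, d\tau\Big)\, e^{G(t)} f(x,t), \qquad D(t) := \int_0^t e^{\frac{p}{p-1} G(\tau)} f(x,\tau)\, d\tau, $$
so that $(H_{AR})_1$ amounts to the existence of $\theta > p$ and $s_0$ with $N(t) \geq \theta D(t)$ for $t \geq s_0$, while $(H_{AR})_2$ amounts to $D(t) \geq \delta > 0$ for $t$ large.

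First I would dispose of $(H_{AR})_2$ at once: $(H_f)'$ provides $\varepsilon > 0$ and $s_0$ with $f(x,s) \geq \varepsilon$ for $s \geq s_0$, and since $g \geq 0$ forces $G \geq 0$, one has
$$ D(t) \;\geq\; \varepsilon \int_{s_0}^t e^{\frac{p}{p-1} G(\tau)}\, d\tau \;\geq\; \varepsilon(t - s_0) $$
for all $t \geq s_0$, uniformly in $x \in \Omega$. This yields $(H_{AR})_2$ with $\Omega_1 = \Omega$ and, at the same time, the uniform divergence $D(t) \to \infty$ that will be needed below.

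Next I would compute the ratio of derivatives. Using $A'(t) = e^{G(t)/(p-1)}$ and $G'(t) = g(t)$, a direct differentiation yields
$$ \frac{N'(t)}{D'(t)} = 1 + \frac{\int_0^t e^{G(\tau)/(p-1)}\, d\tau}{e^{G(t)/(p-1)}}\Big(g(t) + \frac{f'(x,t)}{f(x,t)}\Big), $$
so $(H_{AR})'$ says precisely that $\liminf_{t\to\infty} N'(t)/D'(t) > 1 + (p-1) = p$, uniformly in $x \in \Omega$.

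Finally I would transfer this bound from $N'/D'$ to $N/D$ by a Stolz/L'H\^opital argument carried out uniformly in $x$: fixing $\eta > 0$ small, the previous step produces $s_1 \geq s_0$ such that $N'(t) \geq (p+\eta) D'(t)$ for every $x \in \Omega$ and $t \geq s_1$; integrating over $[s_1, t]$ and dividing by $D(t)$ (which goes to infinity uniformly in $x$ by the second step) gives $N(t)/D(t) \geq p + \eta - o(1)$, hence $N(t) \geq \theta D(t)$ for $t$ large and some $\theta > p$, independently of $x$. This is $(H_{AR})_1$. The main point requiring care is precisely this uniform passage from derivatives to the original ratio; but it is routine thanks to the uniform linear lower bound $D(t) \geq \varepsilon(t-s_0)$ established above.
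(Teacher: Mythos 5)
Your proof is correct and follows essentially the same route as the paper: dispose of $(H_{AR})_2$ via the lower bound $f(x,s)\geq\varepsilon$ supplied by $(H_f)'$, then show that the quotient $N(s)/D(s)$ appearing in $(H_{AR})_1$ has limit inferior $>p$ by computing $N'/D'$ and recognizing $(H_{AR})'$ there --- exactly the paper's L'H\^opital step. The only difference is that you spell out the uniform-in-$x$ passage from $N'/D'$ to $N/D$ (using $D(t)\geq\varepsilon(t-s_0)$ and the boundedness of $D(s_1)$), a detail the paper leaves implicit with ``one easily concludes.''
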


\begin{proof}
Note first that by $(H_f)'$ we have, for $s$ large and for some $\varepsilon>0$, $f(x,s) \geq \varepsilon$, which clearly implies $(H_{AR})_2$ with $\Omega_1=\Omega$. We prove now $(H_{AR})_1$. To this end, it is enough to show that
\begin{equation}
\label{hlim}
\lim_{s \to \infty} \frac{f(x,s)e^{G(s)}\int_0^s e^{\frac{G(t)}{p-1}} dt}{\int_0^s e^{\frac{pG(t)}{p-1}} f(x,t) dt} >p
\end{equation}
uniformly with respect to $x \in \Omega$. Using L'Hospital rule in \eqref{hlim}, one easily concludes that $(H_{AR})'$ implies that the limit in \eqref{hlim} exists and is larger than $p$.
\end{proof}

\begin{rem}\label{rexp}
{\rm Let us assume $g \equiv 1$. We claim that a necessary condition for $(H_{AR})_1$ and $(H_{AR})_2$ to hold is that $F(x,s)$ has at least an exponential growth on $\Omega_1$, i.e. there exist $C_1,C_2>0$ and $s_0 \geq 0$ such that
\begin{equation}
\label{fexp}
F(x,s) \geq C_1e^{C_2 s}
\end{equation}
for all $s \geq s_0$ and {\it a.e.} $x \in \Omega_1$. Indeed, $(H_{AR})_1$ and $(H_{AR})_2$ with $g \equiv 1$ imply the existence of $\mu$ such that
$$\frac{p}{p-1} <\mu \leq \liminf_{s \to \infty} \frac{e^{\frac{ps}{p-1}}f(x,s)}{\int_0^s e^{\frac{pt}{p-1}}f(x,t) dt}$$
for {\it a.e.} $x \in \Omega_1$. One deduces from the above inequality the existence of $C>0$ and $s_1 \geq 0$ such that
$$\int_0^s e^{\frac{pt}{p-1}}f(x,t) dt \geq C e^{\mu s}$$
for all $s \geq s_1$ and {\it a.e.} $x \in \Omega_1$. Integration by parts gives $$F(x,s) \geq C_1e^{\left(\mu - \frac{p}{p-1}\right) s}$$
for all $s \geq s_1$ and {\it a.e.} $x \in \Omega_1$, which provides \eqref{fexp}. It follows in particular from \eqref{fexp} that, when $g \equiv 1$, any $f(x,s)$ of polynomial type with respect to $s$ does not satisfy $(H_{AR})_1$ and $(H_{AR})_2$.}
\end{rem}

The four propositions below describe the form taken by conditions $(H_{SC})$ and $(H_{AR})'$ in various cases, which are classified according to the behavior of $g$ and $f$ at infinity. Proposition \ref{peq} (respect. \ref{peq2}, \ref{peq3}, \ref{peq4}) together with Theorem \ref{t1} and Proposition \ref{thlim} clearly implies Corollary \ref{c1} (respect. \ref{c2}, \ref{c3}, \ref{c4}).

\begin{prop}\label{peq}
Assume $(H_f)'$ and $g(s) \to g_{\infty}$ as $s \to \infty$, with $0<g_\infty<\infty$. Then:
\begin{enumerate}
\item $(H_{SC})$ holds if and only if for some $r$ with $r<p^*$, there holds
\begin{equation}
\label{f1}
\lim_{s \to \infty} \frac{f(x,s)}{e^{\frac{r-p}{p-1} G(s)}} =0
\end{equation}
uniformly with respect to $x \in \Omega$.\\
\item $(H_{AR})'$ holds if and only if 
\begin{equation}
\label{f2}
\lim_{s \to \infty} \frac{f'(x,s)}{f(x,s)} >0
\end{equation}
uniformly with respect to $x \in \Omega$.
\end{enumerate}
\end{prop}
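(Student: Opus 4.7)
The whole argument rests on one asymptotic identity, namely
\begin{equation*}
\lim_{s\to\infty}\frac{\int_0^s e^{G(t)/(p-1)}\,dt}{e^{G(s)/(p-1)}} \;=\; \frac{p-1}{g_\infty}.
\end{equation*}
Indeed, since $g_\infty>0$ forces $G(s)\to\infty$, both numerator and denominator diverge, and since the derivative of the denominator is $g(s)e^{G(s)/(p-1)}/(p-1)$ with $g(s)\to g_\infty$, L'H\^opital's rule (with numerator derivative $e^{G(s)/(p-1)}$) delivers the above limit. This convergence involves no $x$, so uniformity in $x$ is automatic throughout.

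For part (1), I would set $I(s):=\int_0^s e^{G(t)/(p-1)}\,dt$, so that $I(s)^{r-1}\sim\bigl((p-1)/g_\infty\bigr)^{r-1}e^{(r-1)G(s)/(p-1)}$ as $s\to\infty$. The elementary identity
\begin{equation*}
G(s)-\tfrac{r-1}{p-1}G(s)=-\tfrac{r-p}{p-1}G(s)
\end{equation*}
then shows that the quotient appearing in $(H_{SC})$ is asymptotically equivalent, up to a strictly positive $x$-independent multiplicative constant, to $f(x,s)/e^{(r-p)G(s)/(p-1)}$. Hence $(H_{SC})$ and \eqref{f1} hold simultaneously for the same value of $r$, and the uniform character passes through without further work.

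For part (2), I would split the left-hand side of $(H_{AR})'$ as
\begin{equation*}
\frac{I(s)}{e^{G(s)/(p-1)}}\,g(s)\;+\;\frac{I(s)}{e^{G(s)/(p-1)}}\cdot\frac{f'(x,s)}{f(x,s)}.
\end{equation*}
The first summand tends deterministically to $\bigl((p-1)/g_\infty\bigr)\cdot g_\infty=p-1$. In the second summand, the factor multiplying $f'/f$ converges to the strictly positive constant $L:=(p-1)/g_\infty$, hence for $s$ large is pinched between two positive constants independent of $x$. Therefore the statement ``the whole left-hand side eventually exceeds $p-1$ by a uniform positive amount'' is equivalent to ``$f'(x,s)/f(x,s)$ eventually exceeds $0$ by a uniform positive amount'', which is exactly \eqref{f2}. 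The main obstacle is modest: one must track uniformity through the limits, but since $I(s)/e^{G(s)/(p-1)}$ and $g(s)$ are $x$-free while $f'/f$ is the sole carrier of $x$-dependence, this reduces to routine $\varepsilon$-$\delta$ manipulations with positive multiplicative constants.
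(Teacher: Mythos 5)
Your proposal is correct and follows essentially the same route as the paper: both parts rest on the single L'H\^opital computation $\int_0^s e^{G(t)/(p-1)}\,dt\,/\,e^{G(s)/(p-1)}\to (p-1)/g_\infty$, from which the quotient in $(H_{SC})$ is asymptotically a positive constant times $f(x,s)/e^{\frac{r-p}{p-1}G(s)}$ and the left-hand side of $(H_{AR})'$ splits exactly as you describe. The only cosmetic difference is that the paper phrases part (1) through the $(r-1)$-th root of the quotient rather than your direct asymptotic-equivalence of the $(r-1)$-th power, which changes nothing of substance.
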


Note that \eqref{f2} implies that $f$ must have exponential growth.

\begin{proof}
\strut
\begin{enumerate}
\item Since  $0<g_{\infty}<\infty$ we have $\dis \lim_{s \to \infty} G(s)=\infty$, so that $$\dis  \lim_{s \to \infty} \int_0^s e^{\frac{G(t)}{p-1}} \ dt=\lim_{s \to \infty} e^{\frac{G(s)}{r-1}}=\infty.$$ We use L'Hospital rule to obtain
$$\lim_{s \to \infty} \frac{e^{\frac{G(s)}{r-1}}}{ \int_0^s e^{\frac{G(t)}{p-1}} \ dt}=\lim_{s \to \infty} \frac{g(s)}{r-1} e^{\frac{(p-r)G(s)}{(p-1)(r-1)}}.$$
Since 
$$
\lim_{s \to \infty} \frac{e^{G(s)}f(x,s)}{\left( \int_0^s e^{\frac{G(t)}{p-1}} d t\right)^{r-1}}= \left(\lim_{s \to \infty} \frac{e^{\frac{G(s)}{r-1}}f(x,s)^{\frac{1}{r-1}}}{\int_0^s e^{\frac{G(t)}{p-1}} dt}\right)^{r-1},
$$
we see that $(H_{SC})$ holds if and only if $$\lim_{s \to \infty} \frac{e^{\frac{G(s)}{r-1}}f(x,s)^{\frac{1}{r-1}}}{\int_0^s e^{\frac{G(t)}{p-1}} dt}=0.$$
From $$\lim_{s \to \infty} \frac{e^{\frac{G(s)}{r-1}}}{ \int_0^s e^{\frac{G(t)}{p-1}} \ dt} \frac{p-1}{g(s) e^{\frac{(p-r)G(s)}{(p-1)(r-1)}} }=\lim_{s \to \infty} 
\frac{p-1}{g(s)} \frac{e^{\frac{G(s)}{p-1}}}{ \int_0^s e^{\frac{G(t)}{p-1}} \ dt} =1,$$
we infer 
\begin{eqnarray*}
\lim_{s \to \infty} \frac{e^{\frac{G(s)}{r-1}}f(x,s)^{\frac{1}{r-1}}}{\int_0^s e^{\frac{G(t)}{p-1}} dt}&=&\lim_{s \to \infty} \frac{g(s)}{p-1} f(x,s)^{\frac{1}{r-1}}e^{\frac{(p-r)G(s)}{(p-1)(r-1)}}\\&=& \frac{g_\infty}{p-1}\lim_{s \to \infty} \left( \frac{f(x,s)}{e^{\frac{r-p}{p-1} G(s)}}\right)^{\frac{1}{r-1}}
\end{eqnarray*}
Thus, we conclude that $(H_{SC})$ holds if and only if, for some $r<p^*$, there holds
$$\lim_{s \to \infty} \frac{f(x,s)}{e^{\frac{r-p}{p-1} G(s)}}=0$$
uniformly with respect to $x \in \Omega$.\\

\item By L'Hospital rule, we have
$$\lim_{s \to \infty} \frac{ \int_0^s e^{\frac{G(t)}{p-1}} \ dt} {e^{\frac{G(s)}{p-1}}}=\frac{p-1}{g_{\infty}}.$$
Thus $$ \lim_{s \to \infty} \left(\int_0^s e^{\frac{G(t)}{p-1}} dt\right) e^{-\frac{G(s)}{p-1}}\left[ g(s) + \frac{f'(x,s)}{f(x,s)}\right]>p-1$$ if and only if
$$ \lim_{s \to \infty} g(s)^{-1}\left( g(s) + \frac{f'(x,s)}{f(x,s)}\right) >1,$$
i.e. $$\lim_{s \to \infty} \frac{f'(x,s)}{f(x,s)} >0.$$
\end{enumerate}
\end{proof}

\begin{prop}\label{peq2}
Assume $(H_f)'$ and $g(s) \to 0$ as $s \to \infty$. In addition, assume that there exists $s_0>0$ such that $g'(s)$ exists for $s>s_0$ and  $\frac{g'(s)}{g(s)^2} \to 0$ as $s \to \infty$. Then:
\begin{enumerate}
\item $(H_{SC})$ holds if and only if for some $r$ with $r<p^*$, there holds
\begin{equation}
\label{f3}
\lim_{s \to \infty} \frac{f(x,s)g(s)^{r-1}}{e^{\frac{r-p}{p-1} G(s)}} =0
\end{equation}
uniformly with respect to $x \in \Omega$.\\
\item $(H_{AR})'$ holds if and only if 
\begin{equation}
\label{f4}
\lim_{s \to \infty} \frac{f'(x,s)}{f(x,s)g(s)} >0
\end{equation}
uniformly with respect to $x \in \Omega$.
\end{enumerate}
\end{prop}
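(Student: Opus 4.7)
The proof rests on a single asymptotic estimate. Writing $L(s) := \int_0^s e^{G(t)/(p-1)}\,dt$, I claim that under the hypothesis $g'(s)/g(s)^2 \to 0$ one has
$$L(s) \sim \frac{p-1}{g(s)}\, e^{G(s)/(p-1)} \quad \text{as } s \to \infty. \qquad (\ast)$$
Since $G$ is nondecreasing we have $L(s) \ge s \to \infty$, while $g \to 0$ with $g > 0$ eventually gives $e^{G(s)/(p-1)}/g(s) \to \infty$. L'Hospital's rule applied to $L(s)\bigl/\bigl(e^{G(s)/(p-1)}/g(s)\bigr)$ yields, after differentiating and simplifying,
$$\lim_{s \to \infty} \frac{L(s)\,g(s)}{e^{G(s)/(p-1)}} = \lim_{s \to \infty} \frac{g(s)^2}{g(s)^2/(p-1) - g'(s)} = \lim_{s \to \infty} \frac{1}{1/(p-1) - g'(s)/g(s)^2} = p-1,$$
which is $(\ast)$.

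For part (1), I substitute $(\ast)$ into the definition of $(H_{SC})$. Raising to the power $r-1$ and combining the exponents via $G(s) - (r-1)G(s)/(p-1) = -\frac{r-p}{p-1}\, G(s)$ yields
$$\frac{f(x,s)\, e^{G(s)}}{L(s)^{r-1}} \sim \frac{1}{(p-1)^{r-1}} \cdot \frac{f(x,s)\, g(s)^{r-1}}{e^{\frac{r-p}{p-1}\, G(s)}}.$$
Since $(\ast)$ is $x$-independent, the condition $(H_{SC})$ for a given $r<p^*$ holds uniformly in $x$ if and only if \eqref{f3} does with the same $r$.

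For part (2), $(\ast)$ shows that the factor $L(s)/e^{G(s)/(p-1)}$ appearing in $(H_{AR})'$ is asymptotic to $(p-1)/g(s)$. Hence $(H_{AR})'$ is equivalent to
$$\lim_{s \to \infty} \frac{p-1}{g(s)}\left( g(s) + \frac{f'(x,s)}{f(x,s)}\right) > p-1 \quad \text{uniformly in } x,$$
which, after dividing by $p-1$ and subtracting $1$, is exactly \eqref{f4}.

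The main technical point is keeping the L'Hospital application and the subsequent substitutions uniform in $x$. Because $g$, $G$, and $L$ do not depend on $x$, the asymptotic $(\ast)$ is entirely $x$-free and transfers the uniform-in-$x$ hypotheses on $f$ and $f'/f$ directly to the transformed conditions. The argument parallels that of Proposition \ref{peq}, the only new ingredient being the sharper estimate $(\ast)$, which replaces the elementary limit $L(s)\,g_\infty/e^{G(s)/(p-1)} \to p-1$ used there; the hypothesis $g'/g^2 \to 0$ is precisely what is needed to compensate for the vanishing of $g$ in this finer regime.
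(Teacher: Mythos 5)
Your proof is correct and takes essentially the same route as the paper's: the key estimate $(\ast)$, namely $\int_0^s e^{G(t)/(p-1)}\,dt \sim \frac{p-1}{g(s)}\,e^{G(s)/(p-1)}$ obtained via L'Hospital from $g'/g^2\to 0$, is exactly the limit the authors compute (once for each part), and the subsequent $x$-independent substitutions into $(H_{SC})$ and $(H_{AR})'$ coincide with theirs.
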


\begin{proof}
\strut
\begin{enumerate}
\item First note that by L'Hospital rule  $$\dis \lim_{s \to \infty} sg(s)=\lim_{s \to \infty} \frac{s}{g(s)^{-1}}=\lim_{s \to \infty} - \left( \frac{g'(s)}{g(s)^2}\right)^{-1}=\infty,$$ and consequently $\dis \lim_{s \to \infty} G(s)=\infty$. Hence
$$
 \lim_{s \to \infty} \frac{e^{\frac{G(s)}{r-1}}}{ \int_0^s e^{\frac{G(t)}{p-1}} \ dt} \frac{p-1}{g(s) e^{\frac{(p-r)G(s)}{(p-1)(r-1)}} }=\lim_{s \to \infty} 
(p-1)\frac{g(s)^{-1}e^{\frac{G(s)}{p-1}}}{ \int_0^s e^{\frac{G(t)}{p-1}} \ dt} .$$
By L'Hospital rule, the latter limit is equal to
$$(p-1)\lim_{s \to \infty} \left(\frac{1}{p-1}-\frac{g'(s)}{g(s)^2}\right)=1.$$
Thus we have
$$
 \lim_{s \to \infty} \left( \frac{ e^{\frac{G(s)}{r-1}}}{\int_0^s e^{\frac{G(t)}{p-1}} dt}\right)^{r-1} f(x,s)=  \lim_{s \to \infty} \left( \frac{g(s)}{p-1}\right)^{r-1} \frac{f(x,s)}{e^{\frac{r-p}{p-1} G(s)}}=0
$$
if and only if $$\lim_{s \to \infty} \frac{g(s)^{r-1}f(x,s)}{e^{\frac{r-p}{p-1} G(s)}}=0$$
for some $r<p^*$.\\

\item Note that
$$\lim_{s \to \infty} g(s) \frac{ \int_0^s e^{\frac{G(t)}{p-1}} \ dt} {e^{\frac{G(s)}{p-1}}}=\lim_{s \to \infty}  \frac{ \int_0^s e^{\frac{G(t)}{p-1}} \ dt} {e^{\frac{G(s)}{p-1}}g(s)^{-1}}.$$
By L'Hospital rule, the latter limit is equal to
$$\lim_{s \to \infty} \left( \frac{1}{p-1} -\frac{g'(s)}{g(s)^2}\right)^{-1}=p-1.$$
Finally, since $$\lim_{s \to \infty} \frac{ \int_0^s e^{\frac{G(t)}{p-1}} \ dt} {e^{\frac{G(s)}{p-1}}}=\lim_{s \to \infty}  \frac{p-1}{g(s)}$$
we infer that $(H_{AR})'$ holds if and only if
$$
\lim_{s \to \infty} \frac{f'(x,s)}{f(x,s)g(s)} >0
$$
uniformly with respect to $x \in \Omega$.
\end{enumerate}
\end{proof}

\begin{prop}\label{peq3}
Assume $(H_f)'$ and  $g(s) \to 0$ with $sg(s) \to c$ as $s \to \infty$, with $0\leq c<\infty$. Then:
\begin{enumerate}
\item $(H_{SC})$ holds if and only if for some $r<p^*$, there holds
\begin{equation}
\lim_{s \to \infty} \frac{f(x,s)}{s^{r-1}} =0
\end{equation}
uniformly with respect to $x \in \Omega$.\\
\item $(H_{AR})$ holds if and only if 
\begin{equation}
\lim_{s \to \infty} \frac{sf'(x,s)}{f(x,s)} >p-1
\end{equation}
uniformly with respect to $x \in \Omega$.
\end{enumerate}
\end{prop}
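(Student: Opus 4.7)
The plan is to mirror the strategy of Propositions \ref{peq} and \ref{peq2}: compute the asymptotics of $A(s)$ via L'H\^{o}pital's rule and then translate $(H_{SC})$ and $(H_{AR})'$ (which by Proposition \ref{thlim} implies $(H_{AR})_1$ and $(H_{AR})_2$) into conditions directly on $f$. The key asymptotic I would establish first is
$$\lim_{s\to\infty}\frac{A(s)}{s\,e^{G(s)/(p-1)}}=\frac{p-1}{p-1+c}.$$
Since $g\ge 0$ gives $A(s)\ge s\to\infty$ and $se^{G/(p-1)}\to\infty$, L'H\^{o}pital applies; differentiating uses $A'(s)=e^{G(s)/(p-1)}$ and $(se^{G/(p-1)})'=e^{G/(p-1)}(1+sg(s)/(p-1))$, and the hypothesis $sg(s)\to c$ yields the stated value.

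For part (1), I would factor
$$\frac{e^{G(s)}f(x,s)}{A(s)^{r-1}}=\frac{f(x,s)}{s^{r-1}}\cdot\frac{s^{r-1}\,e^{G(s)}}{A(s)^{r-1}},$$
and use the key asymptotic together with the consequence $G(s)\sim c\log s$ of $sg(s)\to c$ (with $G$ bounded in the case $c=0$) to control the second factor. Varying $r$ then yields the equivalence between $(H_{SC})$ and the existence of $r<p^*$ with $\lim_{s\to\infty} f(x,s)/s^{r-1}=0$. For part (2), Proposition \ref{thlim} reduces the task to analysing $(H_{AR})'$, which I would rewrite as
$$\frac{A(s)}{e^{G(s)/(p-1)}}\Bigl(g(s)+\frac{f'(x,s)}{f(x,s)}\Bigr)=\frac{A(s)}{s\,e^{G(s)/(p-1)}}\Bigl(sg(s)+\frac{sf'(x,s)}{f(x,s)}\Bigr).$$
By the key asymptotic and $sg(s)\to c$, the limit of the right-hand side is $\tfrac{p-1}{p-1+c}\bigl(c+\lim sf'/f\bigr)$, and requiring this to exceed $p-1$ collapses precisely to $\lim_{s\to\infty} sf'(x,s)/f(x,s)>p-1$, as stated.

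The main obstacle I expect is the bookkeeping in part (1) when $c>0$: in that regime $e^{G(s)}$ itself grows like $s^{c}$ (up to a factor of the form $s^{o(1)}$), so the second factor $s^{r-1}e^{G}/A^{r-1}$ is no longer of order one but behaves like $s^{c(p-r)/(p-1)+o(1)}$. Matching the threshold $r<p^*$ on both sides of the equivalence therefore requires choosing the witness exponent carefully, exploiting both the freedom of ``for some $r$'' on each side and the upward stability of the condition $\lim f/s^{r-1}=0$ in $r$. The case $c=0$, in contrast, should be essentially immediate from the key asymptotic together with the argument of Proposition \ref{peq2}.
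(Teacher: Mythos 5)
Your overall route is the same as the paper's: the key asymptotic $\lim_{s\to\infty} A(s)/\bigl(s\,e^{G(s)/(p-1)}\bigr)=\tfrac{p-1}{p-1+c}$ is exactly the L'H\^opital computation the paper performs, and your treatment of part (2) --- rewriting $(H_{AR})'$ with the factor $sg(s)+sf'/f$ and reducing $\tfrac{p-1}{p-1+c}\bigl(c+\lim sf'/f\bigr)>p-1$ to $\lim sf'/f>p-1$ --- coincides with the paper's argument and is complete. Likewise the ``if'' half of part (1) works as you describe: the prefactor $s^{r-1}e^{G(s)}/A(s)^{r-1}=\bigl(se^{G(s)/(r-1)}/A(s)\bigr)^{r-1}$ is bounded above (the paper checks this via $e^{(r-p)G(s)/((p-1)(r-1))}\geq 1$), so $f=o(s^{r-1})$ forces $(H_{SC})$ with the same $r$.

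The obstacle you flag in part (1) for $c>0$ is real, but the remedy you propose --- ``choosing the witness exponent carefully'' --- cannot close it, because the required new witness can be pushed above $p^*$. Concretely, take $p=2$, $N=3$ (so $p^*=6$), $g(s)=10/(1+s)$ (so $c=10$) and $f(s)=s^{20}$: then $e^{G(s)}=(1+s)^{10}$, $A(s)\sim s^{11}/11$, and $(H_{SC})$ holds with $r=5$ since $s^{20}(1+s)^{10}/A(s)^{4}\sim 11^{4}s^{-14}\to 0$, yet $f(s)/s^{r-1}=s^{21-r}\to\infty$ for every $r<6$. So the ``only if'' of part (1) genuinely fails when $c>0$, and no bookkeeping with the witness exponent will recover it. You should know that the paper's own proof has the same defect: it only establishes that the prefactor is \emph{bounded}, which yields one implication; the converse would need it bounded away from zero, which fails as soon as $G(s)\to\infty$, in particular whenever $c>0$. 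When $c=0$ the converse can be salvaged by a small increase of $r$, since then $e^{G(s)}=s^{o(1)}$ --- but note that $c=0$ does not force $G$ to be bounded, contrary to your parenthetical (take $g(s)=1/((1+s)\log(2+s))$). In any case only the ``if'' directions are used in Corollary \ref{c3}, so the applications are unaffected.
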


\begin{proof}
\strut
\begin{enumerate}
\item Note that $$
\lim_{s \to \infty} \frac{s e^{\frac{G(s)}{r-1}}}{\int_0^s e^{\frac{G(t)}{p-1}} dt} =\lim_{s \to \infty} \frac{1+\frac{s g(s)}{r-1}}{e^{\frac{(r-p)G(s)}{(p-1)(r-1)}}} <\infty,
$$
since $e^{\frac{(r-p)G(s)}{(p-1)(r-1)} } \geq 1$ for every $s>0$. Thus
$$
 \lim_{s \to \infty} \left( \frac{ e^{\frac{G(s)}{r-1}}}{\int_0^s e^{\frac{G(t)}{p-1}} dt}\right)^{r-1} f(x,s)= \lim_{s \to \infty} \left( \frac{s e^{\frac{G(s)}{r-1}}}{\int_0^s e^{\frac{G(t)}{p-1}} dt}\right)^{r-1} \frac{f(x,s)}{s^{r-1}}=0$$
if and only if $$\lim_{s \to \infty} \frac{f(x,s)}{s^{r-1}}=0$$ for some  $r<p^*$.\\
\item Since $g(s)\to 0$ as $s \to \infty$ and
\begin{eqnarray*}
\lim_{s \to \infty} \frac{\int_0^s e^{\frac{G(t)}{p-1}}}{se^{\frac{G(s)}{p-1}}}&=&\lim_{s \to \infty} \left(1+ \frac{sg(s)}{p-1}\right)^{-1}\\&=&\frac{p-1}{p-1+c},
\end{eqnarray*}
we have 
$$
\lim_{s \to \infty} \frac{\int_0^s e^{\frac{G(t)}{p-1}} \ dt}{se^{\frac{G(s)}{p-1}}} \left(sg(s)+\frac{sf'(x,s)}{f(x,s)}\right)= \frac{p-1}{p-1+c} \left(c+\lim_{s \to \infty} \frac{sf'(x,s)}{f(x,s)}\right).
$$
Thus $(H_{AR})'$ holds if and only if 
$$\liminf_{s \to \infty}  \frac{sf'(x,s)}{f(x,s)}>p-1$$
uniformly with respect to $x \in \Omega$.
\end{enumerate}
\end{proof}

\begin{prop}\label{peq4}
Assume $(H_f)'$ and $g(s) \to \infty$ as $s \to \infty$, with moreover for some $C,s_0 \geq 0$, $\left| \frac{g'(s)}{g(s)}\right| \leq C$ for $s\geq s_0$. Then:
\begin{enumerate}
\item $(H_{SC})$ holds if and only if for some  $r<p^*$, there holds
\begin{equation}
\lim_{s \to \infty} \frac{f(x,s)}{e^{\frac{r-p}{p-1} G(s)}} =0
\end{equation}
uniformly with respect to $x \in \Omega$.\\
\item $(H_{AR})$ holds if and only if 
\begin{equation}
\lim_{s \to \infty} \frac{f'(x,s)}{f(x,s)g(s)} >0
\end{equation}
uniformly with respect to $x \in \Omega$.
\end{enumerate}
\end{prop}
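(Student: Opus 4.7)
The plan is to follow exactly the strategy of Propositions \ref{peq}--\ref{peq3}: first establish a sharp asymptotic for $\int_0^s e^{G(t)/(p-1)}\,dt$ in terms of $e^{G(s)/(p-1)}$ and $g(s)$, then substitute it into the expressions appearing in $(H_{SC})$ and $(H_{AR})'$. The hypothesis $|g'(s)/g(s)| \leq C$ together with $g(s) \to \infty$ gives $g'(s)/g(s)^2 \to 0$, so from the identity
$$
\frac{d}{ds}\frac{e^{G(s)/(p-1)}}{g(s)} = e^{G(s)/(p-1)}\left(\frac{1}{p-1} - \frac{g'(s)}{g(s)^2}\right),
$$
L'Hospital's rule applied to $\int_0^s e^{G(t)/(p-1)}\,dt \big/ \bigl(e^{G(s)/(p-1)}/g(s)\bigr)$ should yield
$$
\lim_{s \to \infty} g(s)\,\frac{\int_0^s e^{G(t)/(p-1)}\,dt}{e^{G(s)/(p-1)}} = p-1.
$$

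Part (2) then drops out immediately: substituting this asymptotic into $(H_{AR})'$ gives
$$
\lim_{s \to \infty} \frac{\int_0^s e^{G(t)/(p-1)}\,dt}{e^{G(s)/(p-1)}}\left(g(s) + \frac{f'(x,s)}{f(x,s)}\right) = (p-1)\left(1 + \lim_{s \to \infty}\frac{f'(x,s)}{g(s)f(x,s)}\right),
$$
and this limit exceeds $p-1$ exactly when $\lim f'(x,s)/(g(s)f(x,s)) > 0$ uniformly in $x$.

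For part (1), the same asymptotic gives
$$
\frac{f(x,s)e^{G(s)}}{\left(\int_0^s e^{G(t)/(p-1)}\,dt\right)^{r-1}} \sim \frac{g(s)^{r-1}}{(p-1)^{r-1}}\,\frac{f(x,s)}{e^{(r-p)G(s)/(p-1)}},
$$
so $(H_{SC})$ is equivalent to the existence of $r<p^*$ for which $g(s)^{r-1}f(x,s)/e^{(r-p)G(s)/(p-1)} \to 0$ uniformly in $x$. Since $g(s) \to \infty$, the $\Rightarrow$ direction of the equivalence claimed in the statement is automatic with the same $r$. For $\Leftarrow$, I would assume $f(x,s)/e^{(r_0-p)G(s)/(p-1)} \to 0$ for some $r_0 < p^*$ and pick any $r \in (r_0,p^*)$; integrating $|g'/g| \leq C$ yields $\log g(s) = O(s)$, while $g \to \infty$ forces $G(s)/s \to \infty$, so $(r-1)\log g(s) = o(G(s))$, and the factoring
$$
\frac{g(s)^{r-1}f(x,s)}{e^{(r-p)G(s)/(p-1)}} = \frac{g(s)^{r-1}}{e^{(r-r_0)G(s)/(p-1)}}\cdot\frac{f(x,s)}{e^{(r_0-p)G(s)/(p-1)}}
$$
shows that both factors tend to zero.

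The only step that deserves care is the absorption of the parasitic factor $g(s)^{r-1}$ at the end of part (1): unlike in Propositions \ref{peq}--\ref{peq3}, one cannot keep the same exponent on both sides of the equivalence, and the argument relies precisely on the polynomial-versus-superlinear comparison $\log g = O(s) = o(G)$ afforded by the two assumptions on $g$.
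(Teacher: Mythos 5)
Your proof is correct, and its skeleton --- L'Hospital's rule to obtain $\lim_{s\to\infty} g(s)\int_0^s e^{G(t)/(p-1)}\,dt\,/\,e^{G(s)/(p-1)}=p-1$, followed by substitution into $(H_{SC})$ and $(H_{AR})'$ --- is the same as the paper's; your part (2) is essentially identical to the paper's. The genuine difference lies in how the parasitic factor $g(s)^{r-1}$ is removed in part (1). The paper keeps the same exponent $r$ throughout: it applies L'Hospital a second time, writing $\lim g(s)e^{\frac{(p-r)G(s)}{(p-1)(r-1)}}=\lim \frac{g'(s)}{g(s)}\frac{(p-1)(r-1)}{r-p}e^{\frac{(p-r)G(s)}{(p-1)(r-1)}}$, and then uses the bound $|g'/g|\leq C$ to suppress the prefactor. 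You instead reduce $(H_{SC})$ to $\lim g(s)^{r-1}f(x,s)/e^{\frac{r-p}{p-1}G(s)}=0$, obtain one implication for free from $g\geq 1$ eventually, and for the converse enlarge $r_0$ to some $r\in(r_0,p^*)$ and absorb $g^{r-1}=e^{O(s)}$ into $e^{\frac{r-r_0}{p-1}G(s)}$ via $G(s)/s\to\infty$. Your route is the more robust of the two: the paper's substitution of one limit expression for another inside the product with $f(x,s)^{1/(r-1)}$, and its final passage from $\lim\frac{g'}{g}(\cdots)=0$ to $\lim(\cdots)=0$, are only clearly valid in one direction, since $g'/g$ is bounded above but not below; your comparison $\log g(s)=O(s)=o(G(s))$ settles both directions cleanly, at the harmless cost of changing the subcritical exponent. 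You are right to single that absorption out as the one step requiring care.
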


\begin{proof}
\strut
\begin{enumerate}
\item By L'Hospital rule, we have
$$\lim_{s \to \infty} g(s) e^{\frac{(p-r)G(s)}{(p-1)(r-1)}}=\lim_{s \to \infty} \frac{g'(s)}{g(s)}\frac{(p-1)(r-1)}{r-p} e^{\frac{(p-r)G(s)}{(p-1)(r-1)}}.$$
Moreover, $$\lim_{s \to \infty} \frac{e^{\frac{G(s)}{r-1}}}{ \int_0^s e^{\frac{G(t)}{p-1}} \ dt}=\lim_{s \to \infty} \frac{g(s)}{r-1} e^{\frac{(p-r)G(s)}{(p-1)(r-1)}}.$$
Hence, 
\begin{eqnarray*}
\lim_{s \to \infty} \frac{e^{\frac{G(s)}{r-1}}}{ \int_0^s e^{\frac{G(t)}{p-1}} \ dt} f(x,s)^{\frac{1}{r-1}}&=&\lim_{s \to \infty} \frac{g(s)}{r-1} e^{\frac{(p-r)G(s)}{(p-1)(r-1)}}f(x,s)^{\frac{1}{r-1}}\\&=&\lim_{s \to \infty} \frac{g'(s)}{g(s)}\frac{p-1}{r-p} f(x,s)^{\frac{1}{r-1}} e^{\frac{p-r}{(p-1)(r-1)}G(s)}
\end{eqnarray*}
Thus $(H_{SC})$ holds if and only if, for some $r<p^*$, there holds $$\lim_{s \to \infty} \frac{f(x,s)}{e^{\frac{r-p}{p-1} G(s)}}=0$$
uniformly with respect to $x \in \Omega$.\\

\item 
Note that $(H_{AR})'$ holds if and only if
$$\lim_{s \to \infty} \frac{g(s)\int_0^s e^{\frac{G(t)}{p-1}}dt}{e^{\frac{G(s)}{p-1}}}\left(1+ \frac{f'(x,s)}{f(x,s)g(s)}\right)>p-1.$$
Now, by L'Hospital rule, we have
\begin{eqnarray*}
 \lim_{s \to \infty} \frac{g(s)\int_0^s e^{\frac{G(t)}{p-1}}dt}{e^{\frac{G(s)}{p-1}}}&=&  \lim_{s \to \infty} \frac{g(s)e^{\frac{G(s)}{p-1}}+g'(s)\int_0^s e^{\frac{G(t)}{p-1}}dt}{(p-1)^{-1}e^{\frac{G(s)}{p-1}}g(s)}\\&=& p-1 +(p-1)\lim_{s \to \infty} \frac{g'(s)}{g(s)} \frac{\int_0^s e^{\frac{G(t)}{p-1}}dt}{e^{\frac{G(s)}{p-1}}}\\&=&p-1
\end{eqnarray*}
Therefore $(H_{AR})'$ holds if and only if
$$\lim_{s \to \infty} \frac{f'(x,s)}{f(x,s)g(s)} >0.$$
\end{enumerate}
\end{proof}

\medskip
 \section{\textbf{Proofs of the Theorems and Corollary \ref{ct2}}}
\medskip 
 
\noindent {\bf Proof of Theorem \ref{t1}:}
We study $(P)$ in its equivalent form $(Q)$. By Lemma \ref{l1} and Lemma \ref{l2}, the function $h$ given by \eqref{defh} has subcritical growth and satisfies the Ambrosetti-Rabinowitz condition. Moreover, by $(H_{\lambda_1})$ and Lemma \ref{l3}
\begin{equation}\label{3h}
\limsup_{s \to 0} \frac{h(x,s)}{s^{p-1}} <\lambda_1
\end{equation}
uniformly with respect to $x \in \Omega$. The conclusion is now a consequence of the standard result recalled below as Proposition \ref{mp}. \qed

\begin{prop}
\label{mp}
Let $h:\Omega \times [0,\infty) \rightarrow [0,\infty)$ be a Carath\'eodory function with $h(x,s)$ remaining bounded when $s$ remains bounded. Assume that $h$ satisfies $(SC)$, $(AR)_1$, $(AR)_2$ and \eqref{3h}. Then problem $(Q)$ has at least one solution in $C^1(\overline{\Omega})$.
\end{prop}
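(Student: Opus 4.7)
The plan is to prove $(Q)$ has a solution by applying the mountain pass theorem of Ambrosetti--Rabinowitz to the $C^1$ energy functional
\begin{equation*}
I(v):=\frac{1}{p}\int_\Omega |\nabla v|^p\,dx-\int_\Omega \widetilde{H}(x,v)\,dx,\qquad v\in W_0^{1,p}(\Omega),
\end{equation*}
where $\widetilde{h}(x,s):=h(x,s^+)$ (so $\widetilde{h}(x,s)\equiv 0$ for $s\le 0$) and $\widetilde{H}(x,s):=\int_0^s \widetilde{h}(x,t)\,dt$. The subcritical growth $(SC)$ together with the boundedness of $h$ on bounded sets gives $\widetilde{h}(x,s)\le C(1+|s|^{r-1})$ with $r<p^*$, so $I\in C^1(W_0^{1,p}(\Omega),\mathbb{R})$ and its critical points solve $-\Delta_p v=\widetilde{h}(x,v)$ weakly.

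To establish the mountain pass geometry, I would combine \eqref{3h} with $(SC)$ to obtain, for any small $\eta>0$, the pointwise bound $\widetilde{H}(x,s)\le \tfrac{\lambda_1-\eta}{p}|s|^p+C_\eta|s|^r$. The variational characterization of $\lambda_1$ together with the Sobolev embedding $W_0^{1,p}\hookrightarrow L^r$ then yields $I(v)\ge c_1\|v\|^p-c_2\|v\|^r$, so $I\ge\alpha>0$ on a sphere $\|v\|=\rho$ for some $\alpha,\rho>0$. For the ``negative direction'', integrating $(AR)_1$ from $s_0$ to $s$ gives $\widetilde{H}(x,s)\ge \widetilde{H}(x,s_0)(s/s_0)^\theta$ for $s\ge s_0$, while $(AR)_2$ guarantees $\widetilde{H}(x,s_0)\ge\delta$ on $\Omega_1$. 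Fixing a nonnegative $\varphi\in C_c^\infty(\Omega_1)\setminus\{0\}$ and using $\theta>p$, I obtain $I(t\varphi)\to -\infty$ as $t\to\infty$.

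The main technical step will be the Palais--Smale condition. For a PS sequence $\{v_n\}\subset W_0^{1,p}(\Omega)$ at level $c$, splitting $\Omega$ according to whether $v_n\gtrless s_0$ and using $(AR)_1$ where $v_n\ge s_0$ gives
\begin{equation*}
c+o(1)(1+\|v_n\|)\;\ge\; I(v_n)-\tfrac{1}{\theta}\langle I'(v_n),v_n\rangle\;\ge\;\left(\tfrac{1}{p}-\tfrac{1}{\theta}\right)\|v_n\|^p-C,
\end{equation*}
so $\{v_n\}$ is bounded in $W_0^{1,p}(\Omega)$. Extracting a weakly convergent subsequence $v_n\rightharpoonup v$, the subcritical growth $(SC)$ and compactness of $W_0^{1,p}\hookrightarrow L^r(\Omega)$ yield $\int_\Omega \widetilde{h}(x,v_n)(v_n-v)\,dx\to 0$, and the standard $(S_+)$ property of $-\Delta_p$ upgrades this to strong convergence $v_n\to v$.

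The mountain pass theorem then produces a nontrivial critical point $v$ of $I$ with $I(v)\ge\alpha>0$. Testing the Euler--Lagrange equation against $v^-$ and using $\widetilde{h}(x,s)\equiv 0$ for $s\le 0$ shows $v\ge 0$; Moser iteration then gives $v\in L^\infty(\Omega)$, the standard $C^{1,\alpha}$ regularity theory for the $p$-Laplacian provides $v\in C^1(\overline{\Omega})$, and the strong maximum principle of V\'azquez finally forces $v>0$ in $\Omega$, so $v$ is a solution of $(Q)$.
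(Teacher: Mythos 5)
Your proposal is correct and follows essentially the same route as the paper's own (sketched) proof: extend $h$ by zero for $s<0$, run the mountain-pass theorem with \eqref{3h} giving the geometry near zero, $(AR)_1$--$(AR)_2$ giving the unbounded direction and the boundedness of Palais--Smale sequences, $(SC)$ plus the $(S)_+$ property giving compactness, and then the standard $v^-$ test, $L^\infty$ and $C^{1,\alpha}$ regularity, and V\'azquez's strong maximum principle. The only cosmetic differences are that you invoke Moser iteration for the $L^\infty$ bound where the paper cites \cite{An} or \cite{Gu-Ve}, and you should note that the exponent $r$ in $(SC)$ may be taken in $(p,p^*)$ without loss of generality, which your sphere estimate implicitly uses.
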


\noindent {\bf Sketch of the proof of Proposition \ref{mp}:}
Note that \eqref{3h} implies $h(x,0)\equiv 0$. Extend $h$ to $\Omega \times \Re$ by putting $h(x,s)=0$ for $s<0$ and consider the functional
$$J(v):= \frac{1}{p} \int_\Omega |\nabla v|^p - \int_\Omega H(x,v)$$
on $W_0^{1,p}(\Omega)$, where $H(x,s):=\int_0^s h(x,t) dt$.

It is a standard matter to apply the mountain-pass theorem to $J$. Assumption \eqref{3h} leads to the existence of a mountain-pass range around zero, while assumptions $(AR)_1$ and $(AR)_2$ allow the construction of a direction in which $J$ goes to $-\infty$. Assumption $(AR)_1$ implies the boundedness of the Palais-Smale sequences, and assumption $(SC)$ together with the $(S)_+$ property of the $p$-Laplacian provide the required compactness.

One obtains in this way a nontrivial solution $v \in W_0^{1,p}(\Omega)$ of the equation in $(Q)$. One then derives from \cite{An} or \cite{Gu-Ve} that $v$ belongs to $L^{\infty}(\Omega)$. This allows the application of \cite{Lie} to obtain that $v$ belongs to some $C^{1,\alpha}(\overline{\Omega})$. Finally, using $v^-$ as test function in the equation in $(Q)$, one deduces $v \geq 0$. The strong maximum principle of \cite{Va} then yields $v>0$ in $\Omega$ with $\frac{\partial v}{\partial \nu}<0$ on $\partial \Omega$, where $\nu$ denotes the exterior normal. \qed

\medskip

\noindent {\bf Proof of Theorem \ref{t2}:}
It consists in applying Theorem 1.1 from \cite{Liu} to $h(x,s)$.
First of all, $(H_{SC})$ implies that  $h(x,s)$ is a subcritical function. Moreover, as already observed in Section 2, $(H_{\infty})$ implies that $h(x,s)$ has a $p$-superlinear behaviour at infinity, while $(H_m)$ implies that $\frac{h(x,s)}{s^{p-1}}$ is non-decreasing for $s$ large enough. Finally, we have the $p$-superlinearity of $h$ at zero, by hypothesis $(H_{\lambda_1})$ and Lemma \ref{l3}.  Therefore all the hypotheses of Theorem 1.1 from \cite{Liu} are fulfilled, and it follows that problem $(Q)$ has at least one solution. The proof of Theorem \ref{t2} is complete. \qed \\

\noindent {\bf Proof of Corollary \ref{ct2}:}
Using $(H_f)'$, we can compute the derivative (with respect to $s$) of the quotient
$$\frac{e^{G(s)}f(x,s)}{\left(\int_0^s e^{\frac{G(t)}{p-1}} dt\right)^{p-1}},$$ which is given by  $$\frac{e^{G(s)}}{\left(\int_0^s e^{\frac{G(t)}{p-1}} dt\right)^p} \left[ \left(g(s)f(x,s)+f'(x,s)\right)\int_0^s e^{\frac{G(t)}{p-1}} dt-(p-1)f(x,s)e^{\frac{G(s)}{p-1}}\right].$$
Thus $(H_m)$ holds if 
$$f'(x,s)\int_0^s e^{\frac{G(t)}{p-1}} dt \geq f(x,s) \left( (p-1)e^{\frac{G(s)}{p-1}}-g(s)\int_0^s e^{\frac{G(t)}{p-1}} dt\right)$$
for $s$ sufficiently large. This condition is clearly satisfied if $(H_m)'$ holds. Theorem \ref{t2} provides the conclusion. \qed \\

\noindent {\bf Proof of Theorem \ref{t3}:}
It consists in applying Theorems 2.1, 2.2 and 2.3 from \cite{DGU3} to the transformed problem $(Q_\lambda)$. The point is thus to verify that the right-hand side of $(Q_\lambda)$, $\lambda h(x,s)$, satisfies the hypothesis of these theorems.
The hypothesis $(H)$ and $(H_0)$ from \cite{DGU3} are trivially satisfied. Hypothesis $(H_e)$ from \cite{DGU3} can be verified by taking $g(s)=\lambda \sup \{h(x,t): x \in \Omega, t \leq s\}$ and choosing $\lambda>0$ sufficiently small. Hypothesis $(H_{\Omega_1})$ from \cite{DGU3} follows by applying Lemma \ref{l3} to $(H_1)$ above. Consequently, Theorem 2.1 from \cite{DGU3} applies and yields $(1)$.

To prove $(2)$ one writes $A^{-1}(s)=t$ in \eqref{defh} to deduce
\begin{equation}\label{ehs}
\frac{h(x,s)}{s^{p-1}}=\left(\frac{te^{\frac{G(t)}{p-1}}}{\int_0^t e^{\frac{G(\tau)}{p-1}} dt} \right)^{p-1} \frac{f(x,t)}{t^{p-1}}.
\end{equation}
Using L'Hospital rule, one sees that the bracket in \eqref{ehs} has positive limits (possibly $+\infty$) when $t \to 0$ and $t \to \infty$, and consequently, this bracket has a positive infimum $\alpha$ for $t \in (0,\infty)$. It then follows from $(H_2)$ that
$$h(x,s) \geq \alpha n(x) s^{p-1}$$
for {\it a.e.} $x \in \Omega_2$ and all $s \geq 0$. Consequently hypothesis $(H_{\tilde{\Omega}})$ from \cite{DGU3} is satisfied by $\lambda h(x,s)$ after taking $\lambda$ sufficiently large. Theorem 2.2 from \cite{DGU3} then applies and yields $(2)$. 

Finally, since $(H_{SC})$ for $f,g$ implies $(SC)$ for $h$ (cf. Lemma \ref{l1}) and $(H_{AR})_1$ for $f,g$ implies $(AR)_1$ for $h$ (cf. Lemma \ref{l2}), Theorem 2.3 from \cite{DGU3} applies and yields $(3)$.
\qed \\

\noindent {\bf Proof of Theorem \ref{t4}:}
It consists in applying Theorem 2.4  from \cite{DGU3} to the transformed problem $(Q_\lambda)$. There are three hypothesis from \cite{DGU3}  which have to be verified by $\lambda h(x,s)$: $(H_0)'$, $(M)$ and $(H_{\Omega_2})$.
First of all, $(M)$ from \cite{DGU3} is a direct consequence of $(H_4)$.

To verify $(H_{\Omega_2})$ from \cite{DGU3}, one observes that $(H_{AR})_1$ and $(H_{AR})_2$ imply that $h$ satisfies $(AR)_1$ and $(AR)_2$. A standard integration then leads to
$$H(x,s) \geq \nu s^{\theta}$$
for some constants $\nu>0$, $\theta >p$ and for {\it a.e.} $x \in \Omega_1$ and all $s$ large. This is more than needed to have $(H_{\Omega_1})$ from \cite{DGU3}.

The verification of $(H_0)'$ from \cite{DGU3} requires some computation. We have to show that for any $s_0>0$ there exists $B$ such that for {\it a.e.} $x \in \Omega$ the function
$$s \mapsto h(x,s)+Bs^{p-1}$$ 
is nondecreasing on $[0,s_0]$. Writing $A^{-1}(s)=t$ in \eqref{defh}, this amounts to showing that
\begin{equation}
\label{aeq}
t \mapsto e^{G(t)} f(x,t) +B A(t)^{p-1}
\end{equation}
is nondecreasing on $[0,A^{-1}(s_0)]$. We chose the constant $B_{f,A(s_0)}$ provided by $(H_3)$, call it $B_1$, and write the expression in \eqref{aeq} as
\begin{equation}
\label{aeqt}
e^{G(t)}\left( f(x,t)+B_1 t^{p-1}\right) + a(t)
\end{equation}
where
$$a(t)=-B_1 t^{p-1} e^{G(t)} + BA(t)^{p-1}.$$
The first term in \eqref{aeqt} is nondecreasing for $t \in [0,A^{-1}(s_0)]$ by the choice of $B_1$. We will see that by taking $B$ sufficiently large, $a(t)$ is also nondecreasing on $[0,A^{-1}(s_0)]$. One computes the derivative
$$a'(t)=A(t)^{p-2} \left[ B(p-1) e^{\frac{G(t)}{p-1}} -B_1g(t)t e^{G(t)} \frac{t^{p-2}}{A(t)^{p-2}} - B_1(p-1)e^{G(t)} \frac{t^{p-2}}{A(t)^{p-2}}\right].$$
Using the fact that $\frac{A(t)}{t} \to 1$ as $t \to 0$, one easily sees that if $B$ is taken sufficiently large then $a'(t)>0$ for $t \in [0,A^{-1}(s_0)]$. The conclusion follows.
\qed

\end{document}